\numberwithin{equation}{section}
\numberwithin{figure}{section}
\theoremstyle{plain}
\newtheorem{thm}{\protect\theoremname}
\theoremstyle{definition}
\newtheorem{defn}[thm]{\protect\definitionname}
\theoremstyle{plain}
\newtheorem{cor}[thm]{\protect\corollaryname}
\theoremstyle{plain}
\newtheorem{lem}[thm]{\protect\lemmaname}
\theoremstyle{plain}
\newtheorem{prop}[thm]{\protect\propositionname}
\providecommand{\corollaryname}{Corollary}
\providecommand{\definitionname}{Definition}
\providecommand{\lemmaname}{Lemma}
\providecommand{\propositionname}{Proposition}
\providecommand{\theoremname}{Theorem}
\begin{document}
\title{$\sigma_{2}$ Yamabe problem on conic spheres \mbox{II}:boundary compactness
of the moduli}
\author{Hao Fang}
\address{14 MacLean Hall, Department of Mathematics, University of Iowa, Iowa
City, IA, 52242}
\email{hao-fang@uiowa.edu}
\thanks{H.F.'s work is partially supported by a Simons Foundation Collaboration
Grant in Mathematics}
\author{Wei Wei}
\address{Shanghai Center for Mathematical Science, Fudan University, Shanghai,
China}
\email{wei\_wei@fudan.edu.cn}
\thanks{W.W's work is partially supported by BoXin programme.}
\begin{abstract}
We prove a convergence theorem on the moduli space of constant $\sigma_{2}$
metrics for conic 4-spheres. We show that when a numerical condition
is convergent to the boundary case, the geometry of conic 4-spheres
converges to the boundary case while preserving capacity.
\end{abstract}

\maketitle

\section{introduction}

In this paper, we discuss the moduli space of constant $\sigma_{2}$
metric on a conic 4-sphere, following our previous work \cite{FW}. 

We start with notations and some historical comments. Let $(M,g)$
be a smooth manifold and $Ric,R$ be Ricci curvature and scalar curvature
respectively. The Schouten tensor is defined as 
\[
A_{g}=\frac{1}{n-2}(Ric-\frac{1}{2(n-1)}Rg).
\]
Denote $\{\lambda(A_{g})\}$ as the eigenvalue of $A_{g}$ respect
to $g$. Define $k$-th elementary symmetric function $\sigma_{k}(\lambda)$
as the following 
\[
\sigma_{k}(\lambda)=\sum_{1\le i_{1}<\cdots<i_{k}\le n}\lambda_{i_{1}}\lambda_{i_{2}}\cdots\lambda_{i_{k}}.
\]
When $k=1,$ up to a constant, $\sigma_{1}(\lambda(A_{g}))$ is the
scalar curvature. Define the conformal class 
\[
[g]=\{g_{u}=e^{2u}g|u\in C^{\infty}(M)\}.
\]
The classical Yamabe problem is to find a metric in $[g]$ with constant
scalar curvature. $\sigma_{k}$ Yamabe problem is raised by Viaclovsky
in \cite{V1} to find a metric in $[g]$ with constant $\sigma_{k}$
curvature, which is equivalent to find a solution $u$ to the following
equation

\[
\sigma_{k}(\lambda(A_{g_{u}}))=c,
\]
where $A_{g_{u}}=A_{g}-\nabla^{2}u+\nabla u\otimes\nabla u-\frac{|\nabla u|_{g}^{2}}{2}g$
and $c$ is a constant.

In order to make the equation elliptic, a point-wise positive cone
condition is usually considered as below 
\[
\Gamma_{k}^{+}:=\{\lambda=(\lambda_{1},\cdots,\lambda_{n}),\ s.t.\ \sigma_{1}(\lambda)>0,\cdots,\,\sigma_{k}(\lambda)>0\}.
\]

In the last 20 years, $\sigma_{2}$ Yamabe problems in 4-manifolds
have been studied intensively due to its variational structure and
geometric connection with Gauss-Bonnet-Chern formula. In \cite{CGY1},
Chang-Gursky-Yang have constructed a positive $\sigma_{2}$ curvature
metric by exploring the connection between $\sigma_{2}$ curvature
and $Q$ curvature. Furthermore, they have proved that when the manifold
is not conformal to sphere, there exists a smooth metric such that
its $\sigma_{2}$ curvature admits any given positive function \cite{CGY2}.
In \cite{GS}, using a metric structure on the spaces of conformal
factors, Gursky-Streets have proved that the metric with constant
$\sigma_{2}$ curvature is unique in the same setting. There are abundant
results about the existence and regularity of $\sigma_{k}$ Yamabe
problem and we refer to \cite{GLW,GV2,GV3,GV4,GV5,GW1,GW2,H,TW,Tru}
and references therein. 

Singular $\sigma_{k}$ Yamabe problem considers solutions of same
partial differential equations on a smooth manifold away from a lower
dimensional singular set. For $k=1$, the singular Yamabe problem
has been extensively studied from view points of semi-linear elliptic
equations and differential geometry. For $k=1$, $n=2$, or singular
Yamabe problem on surfaces, see the seminal works of Troyanov \cite{Tr,Tr2},
see also works of Chen-Li \cite{CL,CL1}, Luo-Tian \cite{LT} and
Fang-Lai\cite{FL1,FL2,FL3}. For $k=1$, $n\ge3$, concerning the
positive scalar curvature, in a seminal paper \cite{S}, Schoen has
given the first construction of solutions with finite point singularities.
His proof is further simplified by Mazzeo and Pacard \cite{MF}. Many
works have appeared later. Especially, Mazzeo-Pollack-Uhlenbeck \cite{MPU}
have introduced the moduli space $\mathcal{M}$ consisting of singular
solutions for constant scalar curvature and proved that $\mathcal{M}$
is locally a real analytic variety of formal dimension $k$. For $4\le2k<n$,
Mazzieri-Segatti \cite{LSe} have constructed complete locally conformally
flat manifold with constant positive $k$-curvature. For $n\ge5$,
the existence of metric admitting singularities with constant $\sigma_{2}$
curvature on $n$ dimensional manifolds has been proved by Santos
\cite{SS} with some additional Weyl conditions. Note that solutions
for singular $\sigma_{k}$ Yamabe problem are complete metric for
$k<\frac{n}{2}$. 

We focus on the case $k=\frac{n}{2}.$ In \cite{CHY}, Chang-Han-Yang
have studied the radial solution of positive constant $\sigma_{k}$
curvature equation and noticed that when $k=n/2$, the metric near
singularity is cone like. Furthermore, Han-Li-Teixeira in \cite{HLT}
have proved that in a punctured ball, the solution is close to a radial
solution up to a H$\ddot{o}$lder pertubation, which extends the well
known result of Caffarelli-Gidas-Spruck \cite{CGS} for the $k=1$
case. See also \cite{KMPS}. Thus, solutions of $\sigma_{2}$ Yamabe
problem on a 4-sphere can only be conic. Motivated by these works,
we have introduced the conic version to $\sigma_{2}$ Yamabe problem
in \cite{FW}.

For simplicity, we discuss the standard sphere case here. With the
standard stereographic projection, we define $g=e^{2u}g_{E}$ with
singularities $p_{1},\cdots p_{q-1}\in\mathbb{R}^{4},\ p_{q}=\infty$
and 
\begin{itemize}
\item $u(x)=\beta_{i}\ln|x-p_{i}|+v_{i}(x)$ as $x\rightarrow p_{i}$ for
$i=1,\cdots,\,q-1;$ 
\item $u(x)=(-2-\beta_{q})\ln|x|+v_{\infty}(x)$ as $|x|\rightarrow\infty$, 
\end{itemize}
where $v_{i}(x)$ and $v_{\infty}(x)$ are bounded in their respective
neighborhoods. We call above $g$ conic metric and use $(S^{4},D=\sum_{i=1}^{q}\beta_{i}p_{i},g,g_{0})$
to include all the geometric information, where $g_{0}$ is the standard
sphere metric. By a slight abuse of notation, we also use $D$ to
denote the singular set $\{p_{1},\cdots,p_{q}\}$. We also use $[g_{D}]$
to denote the conformal class with above conic singularities $D$.
The corresponding $\sigma_{2}$ Yamabe equation in $\mathbb{R}^{n}$
is 

\begin{equation}
\sigma_{2}(g_{E}^{-1}A_{ij})=\frac{3}{2}e^{4u},\label{eq:sigma2}
\end{equation}
where 

\[
A_{ij}=-u_{ij}+u_{i}u_{j}-\frac{|\nabla u|^{2}}{2}\delta_{ij}.
\]

In our previous work \cite{FW}, we have studied the conic metric
with constant $\sigma_{2}$ curvature on 4-sphere by constructing
a local mass $M(t)$ along level sets of the conformal factor. See
Section 2 for more details regarding $M(t)${\footnotesize{}.} Similar
to Mazzeo-Pollack-Uhlenbeck \cite{MPU}, we use $\mathcal{M}^{2}(\mathbb{S}^{4})$
to denote a moduli space as the set of all smooth solutions $u$ to
singular $\sigma_{2}$ Yamabe problem $\sigma_{2}(\lambda(A_{g}))=\frac{3}{2}$
on $\mathbb{S}^{4}\backslash P,$ where $P=\{p_{1},p_{2},\cdots,p_{q}\}$
are any fixed singularities.

In \cite{FW}, we give the following definition:
\begin{defn}
\label{def:New}Let $(S^{4},D=\sum_{i=1}^{q}\beta_{i}q_{i},g_{0})$
be a conic 4-sphere with the standard background metric $g_{0}$.
For all $j\in\{1,\cdots,q\}$, we denote $\widetilde{\beta_{j}}:=\bigg(\sum_{1\le i\neq j\le q}\beta_{i}^{3}\bigg)^{1/3}$. 
\end{defn}

\begin{itemize}
\item We call $(S^{4},D)$ subcritical for the $\sigma_{2}$ Yamabe equation
if for any $j=1,\cdots,q$\\
 $\frac{3}{8}\beta_{j}^{2}(\beta_{j}+2)^{2}<\frac{3}{8}\widetilde{\beta_{j}}^{2}(\widetilde{\beta_{j}}+2)^{2}+(\widetilde{\beta_{j}}+\frac{3}{2})(\sum_{1\le i\neq j\le q}\beta_{i}^{2}-\widetilde{\beta_{j}}^{2}),$ 
\item We call $(S^{4},D)$ critical for the $\sigma_{2}$ Yamabe equation
if there exists a $j\in\{1,\cdots,q\}$ such that\\
 $\frac{3}{8}\beta_{j}^{2}(\beta_{j}+2)^{2}=\frac{3}{8}\widetilde{\beta_{j}}^{2}(\widetilde{\beta_{j}}+2)^{2}+(\widetilde{\beta_{j}}+\frac{3}{2})(\sum_{1\le i\neq j\le q}\beta_{i}^{2}-\widetilde{\beta_{j}}^{2})$, 
\item Otherwise, we call $(S^{4},D)$ supercritical for the $\sigma_{2}$
Yamabe equation, which means that there exists a $j\in\{1,\cdots,\,q\},$\\
 $\frac{3}{8}\beta_{j}^{2}(\beta_{j}+2)^{2}>\frac{3}{8}\widetilde{\beta_{j}}^{2}(\widetilde{\beta_{j}}+2)^{2}+(\widetilde{\beta_{j}}+\frac{3}{2})(\sum_{1\le i\neq j\le q}\beta_{i}^{2}-\widetilde{\beta_{j}}^{2}).$
\end{itemize}
In \cite{FW}, we have proved the following theorem:
\begin{thm}
\label{thm:Main theorem-1} Let $(S^{4},\,D,\ g_{0})$ be defined
as above. Assume that $\lambda(A_{g})\in\Gamma_{2}^{+}$. If $(S^{4},D)$
is supercritical, then there does not exist a conformal metric $g\in[g_{D}]$
with positive constant $\sigma_{2}$ curvature. If $(S^{4},D)$ is
critical with positive constant $\sigma_{2}$ curvature, then $(S^{4},g)$
is a football. 
\end{thm}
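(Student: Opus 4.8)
The plan is to run the argument through the local mass $M(t)$ of \cite{FW} and its monotonicity along level sets of the conformal factor $u$. Recall from Section~2 that for a conic solution $g=e^{2u}g_E$ of \eqref{eq:sigma2} with $\lambda(A_g)\in\Gamma_2^+$, one attaches to each singular point $p_j$ a quantity $M(t)$ defined on the super-level sets $\Omega_t=\{u>t\}$ truncated near $p_j$: a Pohozaev-type flux through $\{u=t\}$ built from the first Newton tensor $T_1=\sigma_1(A_g)g-A_g$ and $\nabla u$, together with the bulk term $\int_{\Omega_t}\sigma_2(A_g)\,dV_g$. The structural facts I would invoke are: (i) the $\sigma_2$ operator has a divergence structure (Newton-tensor identities in the conformally flat setting), so that $M(t)$ is well defined and insensitive to deforming the level set inside a fixed region; (ii) $\Gamma_2^+$ forces $T_1>0$, which together with $\sigma_2=\tfrac32 e^{4u}>0$ makes $M(t)$ monotone in $t$; (iii) equality in the monotonicity is rigid --- $M$ is constant on a $t$-interval only if, on the corresponding annular region, $u$ is rotationally symmetric and $g$ is a model cone solution.

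Next I would identify the limiting values of $M$ at the two ends. Near $p_j$ the level sets shrink to $p_j$, and substituting the expansion $u(x)=\beta_j\ln|x-p_j|+v_j(x)$ with $v_j$ bounded (and using the refined conic asymptotics of Han--Li--Teixeira \cite{HLT} to control error terms) into the flux integral gives $\lim M=\tfrac38\beta_j^2(\beta_j+2)^2$, the intrinsic $\sigma_2$-capacity of the cone at $p_j$; note $\beta(\beta+2)$ is invariant under $\beta\mapsto -2-\beta$, matching the convention at $p_q=\infty$. At the opposite end the level sets enclose all remaining singularities $\{p_i:i\neq j\}$, and I would show that their combined capacity satisfies
\[
\lim M \;\ge\; \tfrac38\widetilde{\beta_j}^2(\widetilde{\beta_j}+2)^2+(\widetilde{\beta_j}+\tfrac32)\Big(\sum_{1\le i\neq j\le q}\beta_i^2-\widetilde{\beta_j}^2\Big),
\]
by analyzing how the model capacity superposes when several cones merge: the effective cone angle of the union is $\widetilde{\beta_j}=\big(\sum_{i\neq j}\beta_i^3\big)^{1/3}$ (a cube-power superposition dictated by the degree of the $\sigma_2$ equation in dimension four), the leading term is the capacity of a single cone of that angle, and the extra term is nonnegative because $\sum_{i\neq j}\beta_i^2\ge\widetilde{\beta_j}^2$ by the power-mean inequality, with equality exactly when only one $\beta_i$ is nonzero.

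Combining the two ends with the monotonicity of $M$ then yields, for every $j$,
\[
\tfrac38\beta_j^2(\beta_j+2)^2 \;\le\; \tfrac38\widetilde{\beta_j}^2(\widetilde{\beta_j}+2)^2+(\widetilde{\beta_j}+\tfrac32)\Big(\sum_{1\le i\neq j\le q}\beta_i^2-\widetilde{\beta_j}^2\Big).
\]
If $(S^4,D)$ is supercritical this is violated for some $j$, a contradiction, so no conformal metric $g\in[g_D]$ with $\lambda(A_g)\in\Gamma_2^+$ and positive constant $\sigma_2$ curvature exists. If $(S^4,D)$ is critical, then for the index $j$ realizing the equality all of the above inequalities are saturated; in particular $M$ is constant over the whole range of $t$, so by the rigidity in (iii) $u$ is rotationally symmetric about the axis through $p_j$ and $g$ is a model solution, whence the singular set consists of two antipodal points of equal capacity, i.e. $(S^4,g)$ is a football.

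\textbf{Main obstacle.} The hard part is the sharp lower bound for the merged capacity with the precise constants $\tfrac38\widetilde{\beta_j}^2(\widetilde{\beta_j}+2)^2$ and $\widetilde{\beta_j}+\tfrac32$: this is not a soft additivity statement, and it is exactly where the cube-power structure of the $\sigma_2$ equation must be exploited --- plausibly through an iterated application of the monotonicity formula across nested level sets that separate the $p_i$, combined with a constrained optimization in the angles. A second delicate point is the equality/rigidity analysis in the critical case: upgrading ``$M$ constant'' to full rotational symmetry and ruling out $q\ge 3$ when the correction term is strictly positive requires a unique-continuation-type treatment of the equality case of the Pohozaev identity. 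Finally, carrying $M(t)$ all the way to the singular values of $u$ relies on the sharp asymptotics of conic solutions near the $p_i$, itself a nontrivial input.
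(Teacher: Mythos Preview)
This theorem is not proved in the present paper; it is quoted from \cite{FW}, and the tools recalled in Section~2 here (the quantities $A,B,C,D,z$ and the mass $M=\tfrac23 D+\tfrac49 Dz+\tfrac{1}{36}z^{4}-C$ together with $M'\ge 0$) are precisely the ones used there. Your overall architecture is the right one and matches \cite{FW}: place $p_j$ at infinity, compute $M(\pm\infty)$ from the conic asymptotics, and read off the subcritical inequality as $M(-\infty)\le M(+\infty)$; in the critical case $M$ is constant, and equality in the chain of inequalities behind $M'\ge 0$ (isoperimetric, H\"older, the algebraic inequality \eqref{eq:key inequality}) forces every level set to be a round sphere and $u$ to be radial, hence a football.

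The one substantive misstep is your ``main obstacle''. You present the merged-end value as a \emph{sharp inequality} to be won by some optimization over nested level sets, and you flag the specific constants $\tfrac38\widetilde{\beta_j}^{2}(\widetilde{\beta_j}+2)^{2}$ and $\widetilde{\beta_j}+\tfrac32$ as the hard part. In fact both endpoint values of $M$ are \emph{exact equalities}, obtained by plugging the limits
\[
C(+\infty)=0,\qquad z(+\infty)=\Big(\sum_{i\ne j}\beta_i^{3}\Big)^{1/3}=\widetilde{\beta_j},\qquad D(+\infty)=\tfrac32\sum_{i\ne j}\beta_i^{2}+\tfrac12\sum_{i\ne j}\beta_i^{3}
\]
(and the analogous single-cone values at $-\infty$) directly into the explicit formula for $M$. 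The cube root $\widetilde{\beta_j}$ is there simply because $z$ is built from $\fint_{L(t)}|\nabla u|^{3}$, and the ``correction'' term $(\widetilde{\beta_j}+\tfrac32)(\sum_{i\ne j}\beta_i^{2}-\widetilde{\beta_j}^{2})$ falls out of the algebra of $M=\tfrac23 D+\tfrac49 Dz+\tfrac{1}{36}z^4$, not from any superposition or power-mean argument. So the only inequality in the entire proof is the monotonicity $M'\ge 0$; once you compute the two limits (which does use the Han--Li--Teixeira asymptotics you cite, to justify that $z$ and $D$ have the stated limits), the nonexistence in the supercritical case and the rigidity in the critical case follow immediately. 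Your proposed route through ``iterated monotonicity across nested level sets'' and ``constrained optimization in the angles'' is unnecessary and would not in any case produce those exact constants.
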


Here we call $(S^{4},g)$ football if $g=e^{2u}g_{E}$ satisfies (\ref{eq:sigma2})
with two singular points, which is first described in case (a) of
Theorem 1\cite{CHY}. 

Theorem 2 gives some partial results which are parallel to the 2-dimensional
case. But compared to 2-dimensional case, the 4-dimensional one is
fully nonlinear and more complicated. The existence and uniqueness
under the subcritical condition is still unknown.

In this paper, we prove a compactness theorem for the moduli space
$\mathcal{M}^{2}(\mathbb{S}^{4})$, which can be defined as the collection
of all conic metric on $S^{4}$ with constant $\sigma_{2}$ curvature.
This is a parallel result of \cite{FL1} in dimension 2. 

First, we introduce a concept of capacity for the $\sigma_{2}$ Yamabe
problem. Using the stereographic projection from $q\in S^{4},$ any
metric in $\mathcal{M}^{2}(\mathbb{S}^{4})$ can be written as $g_{u}=e^{2u}g_{E}.$
We define 
\begin{align}
K(S^{4},g,q) & =\max_{t\in\mathbb{R}}\frac{1}{|S^{3}|}e^{4t}|\{u\geq t\}|.\label{eq:add52}\\
K(S^{4},g)= & \max_{q\in S^{4}}K(S^{4},g,q).\nonumber 
\end{align}
Note that that for $t\to\pm\infty,$ the quantity in the righthand
side of (\ref{eq:add52}) converges to 0 by the Gauss-Bonnet-Chern
theorem proved in \cite{FW}; see also Theorem \ref{thm: past theorem}.
Thus, $K(S^{4},g,q)$ is well defined. We also point out that $K(S^{4},g,p)$
is scaling and translating invariant. 

Second, we introduce the following 
\begin{defn}
A sequence of subcritical conformal divisors $\{D_{l}=\sum_{i=1}^{q}\beta_{l,i}p_{l,i}\}_{l=1}^{\infty}$
is called numerically convergent and non-degenerate if 
\begin{equation}
\lim_{l\rightarrow\infty}\beta_{l,i}=\beta_{i},\ \ \sum_{i=1}^{q}|\beta_{i}|>0,\label{eq:add13}
\end{equation}
and there exists a universal constant $C>0$ such that 
\begin{equation}
2-\sum_{i=1}^{q}\frac{\beta_{i}^{3}+3\beta_{i}^{2}}{2}\geq C.\label{eq:add12}
\end{equation}
\end{defn}

We remark that if $g_{l}\in(S^{4},D_{l},g_{0})$ has constant $\sigma_{2}$
curvature $\frac{3}{2}$, by the Gauss-Bonnet-Chern theorem proved
in \cite{FW}, (\ref{eq:add12}) implies that there is a constant
$C'>0$ independent of $l$ such that 
\begin{equation}
Vol(S^{4},g_{l})\geq C'.\label{eq:add14}
\end{equation}

Finally, we give the main result of this paper:
\begin{thm}
\label{thm:GHconvergence}Let $\{g_{l}\}_{l=1}^{\infty}\in\mathcal{M}^{2}(\mathbb{S}^{4})$
be a sequence of subcritical metrics with respect to conformal divisors
$D_{l}=\sum_{i=1}^{q}\beta_{l,i}p_{l,i}$. Suppose that $\{D_{l}\}$
is numerically convergent and non-degenerate. If there exists a $j\in\{1,\cdots,q\}$
and some $\epsilon>0$, such that $\widetilde{\beta_{l,j}}:=\bigg(\sum_{1\le i\neq j\le q}\beta_{i}^{3}\bigg)^{1/3}>-1+\epsilon$
and 
\begin{equation}
\lim_{l\to\infty}\frac{3}{8}\beta_{l,j}^{2}(\beta_{l,j}+2)^{2}-[\frac{3}{8}\widetilde{\beta_{l,j}}^{2}(\widetilde{\beta_{l,j}}+2)^{2}+(\widetilde{\beta_{l,j}}+\frac{3}{2})(\sum_{1\le i\neq j\le q}\beta_{l,i}^{2}-\widetilde{\beta_{l,j}}^{2})]=0,\label{eq:add6}
\end{equation}
then there exists a subsequence of $\{g_{l}\}$ converging to $g_{\infty}$
in Gromov-Hausdorff sense, where $g_{\infty}$ is either the standard
4-sphere metric or a football metric defined by Chang-Han-Yang. Furthermore,
$g_{\infty}$ is characterized by the following capacity identity:
\[
K(S^{4},g_{\infty})=\lim_{l\to\infty}K(S^{4},g_{l}).
\]
 
\end{thm}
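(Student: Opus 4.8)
\section{Proof proposal}

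The plan is to run a concentration--compactness argument for the conformal factors $u_l$ written in stereographic coordinates, using the local mass $M(t)$ and the Gauss--Bonnet--Chern identity from \cite{FW} as the bookkeeping device. First I would fix the stereographic projection so that $p_{l,q}=\infty$ and normalize each $g_l=e^{2u_l}g_E$. The hypothesis \eqref{eq:add14} gives a uniform lower volume bound, while the numerical convergence \eqref{eq:add13} together with the boundary condition \eqref{eq:add6} pins down the asymptotic cone angles. The key structural input is that, by Theorem \ref{thm:Main theorem-1}, the limiting divisor $D_\infty=\sum\beta_i p_i$ is critical (equality holds in the limit), so any ``limit solution'' on the limiting divisor must be a football or, if all $\beta_i\to 0$, the round sphere; the analytic content of the proof is to show the sequence actually converges to such a limit rather than degenerating further.

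The main steps, in order: \textbf{(1)} Establish a priori bounds away from the singular points. Using the Han--Li--Teixeira asymptotics \cite{HLT} near each $p_{l,i}$ and interior estimates for $\sigma_2$ equations (the $C^1$ and Hessian estimates of \cite{GW1,GW2,Tru}), show that on compact subsets of $S^4\setminus D_\infty$ the $u_l$ are bounded in $C^k$ unless volume concentrates at a point. \textbf{(2)} Analyze concentration: if $\sup u_l\to\infty$ near some point, rescale (using the translation/scaling invariance of $K$) to extract a bubble, which by the classification in \cite{CHY,HLT} must itself be a football or round metric; track the mass $M(t)$ through the rescaling to see how capacity is distributed. \textbf{(3)} Use the monotonicity/identities for $M(t)$ from Section 2 of \cite{FW} together with \eqref{eq:add6} to show that, in the critical limit, \emph{exactly one} bubble forms and no mass is lost in neck regions --- this is where the critical condition is essential, since in the subcritical regime several configurations could a priori coexist, but criticality forces the degeneration to be the rigid football/sphere picture of Theorem \ref{thm:Main theorem-1}. \textbf{(4)} Identify the limit $g_\infty$ and upgrade the convergence of the rescaled metrics to Gromov--Hausdorff convergence of $(S^4,g_l)$, using the lower volume bound to rule out collapse. \textbf{(5)} Prove the capacity identity by showing $K(S^4,g_l,q_l)$ --- with $q_l$ chosen to realize the max --- converges to the capacity of the bubble, using that $e^{4t}|\{u_l\ge t\}|$ is controlled by $M(t)$ and that $M$ passes to the limit by the no-neck-loss statement in step (3).

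The hard part will be step (3): ruling out ``neck'' energy loss and multiple bubbles in the critical limit. The quantity $\frac{3}{8}\beta_j^2(\beta_j+2)^2-[\frac{3}{8}\widetilde{\beta_j}^2(\widetilde{\beta_j}+2)^2+(\widetilde{\beta_j}+\frac{3}{2})(\sum_{i\ne j}\beta_i^2-\widetilde{\beta_j}^2)]$ is essentially the mass defect between the singularity $p_j$ and the ``merged'' remaining singularities; that it tends to $0$ says the divisor is asymptotically a football, and one must translate this numerical statement into a geometric one via the monotone mass $M(t)$ of \cite{FW}, showing the level-set geometry is squeezed toward that of the football. The condition $\widetilde{\beta_{l,j}}>-1+\epsilon$ is what keeps the complementary cone non-degenerate (so the Gauss--Bonnet bookkeeping stays in the admissible range $\Gamma_2^+$) and prevents a second, hidden degeneration; checking that this $\epsilon$-gap survives all the rescalings is the technical crux. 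The remaining steps are comparatively routine: they combine standard elliptic estimates for $\sigma_2$ with the rigidity already available from Theorem \ref{thm:Main theorem-1}.
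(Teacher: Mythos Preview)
Your outline is a standard concentration--compactness/bubbling scheme, but the paper does \emph{not} argue this way, and the mechanism you are missing is exactly what would make your step (3) work. The paper never extracts bubbles or analyzes necks. Instead it proves directly that the super-level sets $S_l(t)=\{u_l\ge t\}$ converge in measure to round balls for a.e.\ $t$. The point is that condition \eqref{eq:add6} says precisely $M_l(+\infty)-M_l(-\infty)\to 0$, and since $M_l'\ge 0$ this gives $\int_{\mathbb R} M_l'\,dt\to 0$. The core computation (Theorem \ref{thm: key1}) is the pointwise chain
\[
(M_l')^3+12C_l(M_l')^2+48C_l^2M_l'\;\ge\; e^{12t}\Bigl(\tfrac{|L_l(t)|^4}{|S^3|^4}-(4B_l(t))^3\Bigr)\;\ge\; c\,e^{12t}|S_l(t)|^3\alpha(S_l(t))^2,
\]
where $\alpha$ is the Fraenkel asymmetry and the last step is the sharp quantitative isoperimetric inequality of Fusco--Maggi--Pratelli \cite{FMP}. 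This is the key analytic input you do not mention; it replaces Bonnesen's inequality from the dimension-$2$ argument of \cite{FL1}. Integrating and passing to a subsequence gives $\alpha(S_l(t))\to 0$ a.e., so each $S_l(t)$ is close in symmetric difference to a ball $B(x_{l,t},r_{l,t})$. After gauge fixing (scale so $K(S^4,g_l,p_{l,q})=C_l(0)$, translate to pin one center), the local estimates plus the uniform upper bound of Theorem \ref{thm:Uniform decay} give $u_l\to u_\infty$ in $C^\infty_{\mathrm{loc}}$ away from the accumulated singular set, and then the level sets of $u_\infty$ are \emph{exact} balls, forcing $u_\infty$ to be radial; the classification of \cite{CHY} finishes.

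Your step (3), as written, has no mechanism: ``track the mass $M(t)$ to rule out neck loss'' is a hope, not an argument, and the bridge from $\int M_l'\to 0$ to geometric rigidity is precisely the isoperimetric inequality above. You also misread the role of $\widetilde{\beta_{l,j}}>-1+\epsilon$: it is not a $\Gamma_2^+$ admissibility condition surviving rescalings. In the paper it is used only once, through Lemma \ref{lemC_A} ($\partial C/\partial A\ge z+1$): since $z_\infty(0)=\widetilde{\beta_j}>-1$, the ODE inequality forces $C_\infty>0$ on a nontrivial $A$-interval, contradicting $C_l(0)\to 0$. This is what guarantees the capacity normalization is nondegenerate, i.e., that the balls $B(x_{l,t},r_{l,t})$ have radii bounded away from $0$ and $\infty$, without which the gauge fixing is vacuous and no smooth limit can be extracted.
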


Similar to the main result of \cite{FL1}, Theorem \ref{thm:GHconvergence}
indicates that in dimension 4, the regular boundary component of $\mathcal{M}^{2}(\mathbb{S}^{4})$
consists of critical metrics of 0 or 2 singular points, which is uniquely
determined by the capacity. It describes a geometric merging of subcritical
conformal divisors when taking the limit. In addition, we have
\begin{cor}
\label{cor:volume}Conditions as in Theorem \ref{thm:GHconvergence}.
We have
\begin{equation}
Vol(S^{4},g_{\infty})\leq\liminf_{l\to\infty}Vol(S^{4},g_{l}).\label{eq:new}
\end{equation}
In particular, if $g_{\infty}$ is the standard 4-sphere metric, we
have $\text{\ensuremath{\beta_{l,i}\to0} }$ for all $i\in\{1,\cdots,q\}$
and $l\to\infty.$ 
\end{cor}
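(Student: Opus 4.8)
The plan is to derive Corollary \ref{cor:volume} directly from Theorem \ref{thm:GHconvergence} together with the Gauss-Bonnet-Chern theorem of \cite{FW}. The volume inequality \eqref{eq:new} should be a lower semicontinuity statement under Gromov--Hausdorff convergence combined with the fact that the capacity identity pins down the limiting volume from below. More precisely, I would first recall that for $g_l = e^{2u_l}g_E \in \mathcal{M}^2(\mathbb{S}^4)$ with constant $\sigma_2$ curvature $\frac32$, the Gauss-Bonnet-Chern formula gives
\[
Vol(S^4,g_l) = \frac{|S^3|}{?}\Bigl(2 - \sum_{i=1}^q \tfrac{\beta_{l,i}^3 + 3\beta_{l,i}^2}{2}\Bigr)\cdot(\text{normalizing constant}),
\]
so that $\lim_{l\to\infty} Vol(S^4,g_l)$ exists along the subsequence given by Theorem \ref{thm:GHconvergence} and is a continuous function of the limiting divisor data $\beta_i = \lim \beta_{l,i}$. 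Then I would observe that the Gromov--Hausdorff limit $g_\infty$ is either the round metric (with $\beta_i = 0$) or a Chang--Han--Yang football; in either case $Vol(S^4,g_\infty)$ is again computed by the same Gauss-Bonnet-Chern expression but now evaluated at the \emph{football} data, which may have \emph{fewer} effective singular points due to the merging described after Theorem \ref{thm:GHconvergence}.

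The key step is comparing the two volume values. I would argue that mass/capacity is preserved in the limit (the capacity identity $K(S^4,g_\infty) = \lim K(S^4,g_l)$), but volume can only be lost, not gained, because the weak-* limit of the measures $e^{4u_l}\,dx$ may concentrate part of its mass at the singular points, which do not contribute to $Vol(S^4,g_\infty)$ but do contribute to $\lim Vol(S^4,g_l)$. Concretely, the local mass $M(t)$ from \cite{FW} (see Section 2) monotone along level sets controls how volume accumulates near each $p_{l,i}$; as the divisors merge, the total volume splits as a limiting football volume plus nonnegative "defect" contributions at the merging points, yielding \eqref{eq:new}. The cleanest route is probably: pass to the subsequence from Theorem \ref{thm:GHconvergence}, use that $e^{4u_l}dx \rightharpoonup e^{4u_\infty}dx + \mu$ for some nonnegative measure $\mu$ (by Fatou / lower semicontinuity for the absolutely continuous part), and conclude $\int e^{4u_\infty} \le \liminf \int e^{4u_l}$.

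For the second assertion, suppose $g_\infty$ is the standard $4$-sphere metric. Then $Vol(S^4,g_\infty) = |S^4|$, the maximal possible volume, since among all $g\in[g_0]$ with $\sigma_2(\lambda(A_g)) = \frac32$ and $\lambda(A_g)\in\Gamma_2^+$ the round sphere maximizes volume (equivalently, the Gauss-Bonnet-Chern defect $\sum (\beta_i^3 + 3\beta_i^2)/2$ is minimized, hence $=0$, precisely at $\beta_i = 0$). Combined with \eqref{eq:new}, we get $|S^4| = Vol(S^4,g_\infty) \le \liminf Vol(S^4,g_l) \le |S^4|$, forcing $\lim Vol(S^4,g_l) = |S^4|$, so by the Gauss-Bonnet-Chern formula $\sum_{i=1}^q (\beta_i^3 + 3\beta_i^2)/2 \to 0$. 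Since each metric is assumed to lie in $\Gamma_2^+$, one shows (as in \cite{FW}) that each $\beta_{l,i}\in(-2,0]$ so that $\beta_i^3 + 3\beta_i^2 \ge 0$ termwise, whence each term vanishes in the limit and $\beta_{l,i}\to 0$ for every $i$.

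The main obstacle I anticipate is making rigorous the claim that \emph{no volume is created} in the Gromov--Hausdorff limit — i.e., controlling the absolutely continuous versus singular parts of the limiting measure and ruling out that a sequence of metrics with bounded volume could, after rescaling in the GH convergence, produce a limit of strictly larger volume. This requires using the monotonicity and boundedness of $M(t)$ from \cite{FW} carefully near each (possibly merging) singular point, together with the capacity bound $K(S^4,g_l)$, to get a uniform upper bound on the volume carried in small balls around the $p_{l,i}$; the lower semicontinuity \eqref{eq:new} then follows from a routine Fatou argument on the complement of those balls. The bookkeeping when several $p_{l,i}$ collide into one point of the limiting football is the delicate part.
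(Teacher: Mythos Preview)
Your proposal is essentially correct, and your argument for the second assertion (via Gauss--Bonnet--Chern and the termwise nonnegativity of $\beta_i^2(\beta_i+3)$) is exactly what the paper does. One small correction: the conic range is $\beta_{l,i}\in(-1,0]$, not $(-2,0]$; this does not affect the sign argument.

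For the volume inequality \eqref{eq:new} the paper takes a slightly different, more economical route than your Fatou/weak-$*$ plan. Rather than passing to a limiting measure $e^{4u_\infty}dx+\mu$, the paper uses the level-set identity $C_l'(t)=4C_l(t)+A_l'(t)$ (see \eqref{eq:add31}--\eqref{eq:add33}), integrates it over a finite interval $[s,t]$, and uses the convergence $C_l(t)\to C_\infty(t)$ (which follows from the $C^\infty_{loc}$ convergence $u_l\to u_\infty$ on $\mathbb{R}^4\setminus S'$ established in the proof of Theorem~\ref{thm:GHconvergence}) to conclude
\[
A_\infty(t)-A_\infty(s)=\lim_{l\to\infty}\bigl(A_l(t)-A_l(s)\bigr).
\]
Since $A_l(-\infty)=Vol(S^4,g_l)/|S^3|$ and $A_l(+\infty)=0$, equality on finite intervals only yields the one-sided bound $Vol(g_\infty)\le\liminf Vol(g_l)$: volume may leak out as $s\to-\infty$ or $t\to+\infty$, i.e.\ concentrate at the singular points. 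This is the same mechanism your Fatou argument captures, just expressed through the level-set functions already in hand.

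Your anticipated ``main obstacle'' is therefore overstated. Once the $C^\infty_{loc}(\mathbb{R}^4\setminus S')$ convergence of $u_l$ is available from the main theorem, no delicate bookkeeping at merging singular points is needed: either route (Fatou on compacta avoiding $S'$, or the $C$--$A$ relation on finite $t$-intervals) gives \eqref{eq:new} in one line. The capacity identity and the monotonicity of $M(t)$ play no role in this corollary.
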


Note that (\ref{eq:new}) in general is not the identity, which implies
that the Gromov-Hausdorff convergence leads to possible volume collapsing
near singular points. This is a new feature that does not happen in
the dimension 2 case.

For $k<n/2$, in \cite{Wei}, the second-named author also gives a
compactness theorem for complete manifolds with constant $\sigma_{k}$-curvature
under some natural conditions. 

Even though the statement of our result is comparable to that of \cite{FL1},
and both proofs rely heavily on the smallness of isoperimetric defects,
there are some key differences between dimension 2 and 4. First, when
the isoperimetric defect of a region is small, the classical Bennesen
inequality in dimension 2 indicates the Gromov-Hausdorff closeness
of the region to a round 2-disc. There is no similar result for higher
dimensions without convexity assumptions. Instead, we apply result
of \cite{FMP} where a measure theoretical closeness is achieved under
similar conditions. Second, due to the non-linear nature, we use the
capacity concept to fix the conformal gauge, which is different from
methods used in \cite{FL1}. These two new key components, together
with standard a priori estimates, lead us to the needed convergence.
However, the non-linear nature of our setup prevents us from getting
more precise information about the convergence, which should be more
closely examined.

We remark that our new method can be used to give an alternative proof
of main results of \cite{FL1}, because of the simpler relation between
volume and capacity in dimension 2.

The authors intend to study the uniqueness of solutions following
approach of Gursky-Streets \cite{GS}. We would like to address the
existence problem eventually. We would also explore the $\sigma_{2}$
Yamabe problem for general conic 4-manifolds.

We organize this paper as below. In Section 2, we follow the setup
of \cite{FW} to discuss level sets of conformal factors. In Section
3, we list some technical results that are used in our work. In Section
4, we prove the main theorem. 

Both authors would like thank a referee who has pointed out mistakes
in a previous version.

\section{Analysis on the level set}

In this section, we recall some definition and results of \cite{FW}
for our discussion.

For a conic metric $(\mathbb{S}^{4},e^{2u}g_{E},D=\sum_{i=1}^{q}\beta_{i}p_{i})$,
assume that $\sigma_{2}(\lambda(g_{u}))=\frac{3}{2}$ on $\mathbb{S}^{4}\backslash D.$
We begin with the following definition about the level sets of the
conformal factor $u:$
\begin{align*}
L(t) & =\{x:u=t\}\subset\mathbb{\mathbb{R}}^{4},\\
S(t) & =\{x:u\geq t\}\subset\mathbb{R}^{4}.
\end{align*}

For a fixed $t,$ it is clear that $L(t)$ is smooth at a point $P\in L(t)$
if and only if $\nabla u\neq0.$ Define 
\[
S=\{x\in\mathbb{R}^{4}\backslash\{p_{1},\cdots,p_{q}\}|\,\nabla u(x)=0\}.
\]

In \cite{FW}, we have proved the following lemma, which implies the
properness of the integral on the level set.
\begin{lem}
\label{lem:measure 0}\cite{FW}For $U\subset\mathbb{R}^{4},$ if
$\sigma_{2}(g_{u})\neq0$ in $U$, then $\mathcal{H}^{3}(S\cap U)=\mathcal{H}^{4}(S\cap U)=0,$
where \textup{$\mathcal{H}^{3}$,} $\mathcal{H}^{4}$ is the 3-dimensional
and 4-dimensional Hausdorff measure. 
\end{lem}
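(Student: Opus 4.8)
The claim is a Sard-type statement: on the region where $\sigma_2(g_u)\neq 0$, the critical set $S=\{\nabla u=0\}$ has vanishing $\mathcal{H}^3$ and $\mathcal{H}^4$ measure. The plan is to exploit the structure of the $\sigma_2$ equation at a critical point. First I would observe that where $\nabla u=0$, the matrix $A_{ij}=-u_{ij}+u_iu_j-\tfrac12|\nabla u|^2\delta_{ij}$ reduces to $A_{ij}=-u_{ij}$, so the equation becomes $\sigma_2(-D^2u)=\tfrac32 e^{4u}>0$ at such points. In particular $\sigma_2$ of the Hessian is nonzero on $S$, which says the Hessian $D^2u$ has rank at least $n-1=3$ there: if three or more eigenvalues of $D^2u$ vanished then all $\binom{n}{2}$ two-by-two principal minors built from them would vanish and $\sigma_2(D^2u)$ could not stay bounded away from zero. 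This rank condition is the engine for the whole argument.

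The $\mathcal{H}^4$ bound is the easy half: since $D^2u$ is nonsingular (indeed rank $\geq 3$, hence in particular $\nabla u$ is a submersion transverse near... more carefully) — the sharper observation is that at each point of $S$ at least three of the second derivatives are nondegenerate directions, so $S$ is locally contained in the zero set of a map whose differential (the Hessian of $u$, acting as the differential of $\nabla u$) has rank $\geq 3$. By the implicit function theorem / constant rank considerations, $S$ is then locally contained in a $C^1$ submanifold of dimension $\leq n-3 = 1$. I would make this precise by covering $S$ with countably many pieces on which a fixed $3\times 3$ minor of $D^2u$ is invertible; on each such piece the implicit function theorem applied to the corresponding three components of $\nabla u$ exhibits $S$ as contained in a graph over a $1$-dimensional parameter, hence $\mathcal{H}^1(S\cap U)<\infty$ locally and certainly $\mathcal{H}^3(S\cap U)=\mathcal{H}^4(S\cap U)=0$. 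One must check $u$ is regular enough ($C^2$, indeed smooth by elliptic regularity away from $D$) for these derivatives and the implicit function theorem to apply, and that the countable covering can be arranged (e.g. restricting to points where a given minor exceeds $1/m$, $m\in\mathbb{N}$).

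The step I expect to be the main obstacle — or at least the one requiring the most care — is passing from "$\sigma_2(-D^2u)$ bounded away from $0$" to a \emph{uniform} lower bound on the rank good enough to localize. The pointwise rank bound $\geq 3$ is immediate, but to run the covering argument I need, near each $p\in S$, a neighborhood and a fixed choice of $3\times 3$ submatrix of $D^2u$ that stays invertible throughout — this follows from continuity of $D^2u$ and compactness, but one should write it as: for each $p$ there is an index triple $I$ with $\det((D^2u)_I)(p)\neq 0$, hence $\neq 0$ in a neighborhood. Then $S\subset\{x: (\partial_{i_1}u,\partial_{i_2}u,\partial_{i_3}u)=0\}$ on that neighborhood, and the three-function zero set is a $C^1$ curve by the implicit function theorem. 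Summing the measure-zero (indeed $\mathcal{H}^1$-finite) pieces over the countable cover finishes it. An alternative, if one prefers to avoid the explicit minor bookkeeping, is to invoke Sard's theorem for the map $\nabla u$ together with the observation that $S$ consists entirely of critical points where the derivative has corank exactly $1$, placing $S$ inside the critical values' preimage of codimension $\geq 3$; but the implicit function theorem route is more elementary and self-contained, so that is the one I would carry out.
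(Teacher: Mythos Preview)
The paper does not actually prove this lemma here; it is quoted from the authors' earlier work \cite{FW}, so there is no in-text argument to compare against. Your overall strategy---observe that $A_{ij}=-u_{ij}$ on $S$, deduce a rank lower bound on $D^2u$ from $\sigma_2(-D^2u)\neq 0$, then cover $S$ by low-dimensional $C^1$ submanifolds via the implicit function theorem applied to suitable components of $\nabla u$---is sound and is presumably close to what \cite{FW} does.

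There is, however, an off-by-one slip in your rank count that you should fix. Your own sentence ``if three or more eigenvalues of $D^2u$ vanished then \ldots\ $\sigma_2(D^2u)$ could not stay bounded away from zero'' is correct, but its contrapositive is $\operatorname{rank}(D^2u)\geq 2$, not $\geq n-1=3$ as you then claim. The case of rank exactly $2$ genuinely occurs: take $u=x_1^2+x_2^2$ near the origin, where $\sigma_2(-D^2u)=4\neq 0$ while the critical set $\{x_1=x_2=0\}$ is $2$-dimensional. So your assertion that $S$ sits locally in a $1$-dimensional submanifold is false in general; you only get containment in a $2$-dimensional one. Fortunately this is still enough for the lemma, since a $2$-dimensional $C^1$ submanifold has vanishing $\mathcal{H}^3$ and $\mathcal{H}^4$ measure. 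Just run your covering argument with an invertible $2\times 2$ minor of $D^2u$ in place of a $3\times 3$ one and the proof goes through.
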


We adopt the local coordinates in \cite{FW}. Denote 

\begin{equation}
A(t)=\fint_{S(t)}e^{4u},\label{eq:add31}
\end{equation}
\begin{equation}
B(t)=\fint_{S(t)}dx=\frac{|S(t)|}{|S^{3}|},\label{eq:add32}
\end{equation}
\begin{equation}
C(t)=e^{4t}B(t),\label{eq:add33}
\end{equation}
and 
\begin{equation}
z(t)=-(\fint_{L(t)}|\nabla u|^{3}\ dl)^{\frac{1}{3}},\label{eq:add34}
\end{equation}
and finally,

\[
D(t)=\frac{1}{4}(\fint_{L(t)\backslash S}\{2H|\nabla u|^{2}-2|\nabla u|^{3}\}\ dl).
\]
Here $H$ is the mean curvature of the level set $L(t),$ and $dl$
is the induced 3-dimensional measure on $L(t).$ In this paper, $\fint$
means $\frac{1}{|S_{3}|}\int$ and $\omega_{1}=|B_{1}|=\frac{1}{4}|S_{3}|.$ 

To see how singularities affect the shape of a manifold, we introduce
a mass $M(t)$ and some related results, which are proved in our previous
paper \cite{FW}.

Define
\begin{equation}
M(t)=\frac{2}{3}D(t)+\frac{4}{9}D(t)z(t)+\frac{1}{36}z^{4}(t)-C(t).\label{eq:add62}
\end{equation}

\begin{thm}
\label{thm: past theorem}\cite{FW}Let $u$ be the solution to (\ref{eq:sigma2})
with $(S^{4},g_{u},D)$, we have 

\begin{equation}
\int_{\mathbb{R}^{4}}\sigma_{2}(g_{u}^{-1}A_{u})dx=(2-\sum_{i=1}^{q}\frac{\beta_{i}^{3}+3\beta_{i}^{2}}{2})|S^{3}|,\label{eq:gauss-bonnet}
\end{equation}

\begin{equation}
M'(t)\ge0,\label{eq:monotone}
\end{equation}
 
\begin{equation}
E=M'+4C,\label{eq: new form}
\end{equation}
 
\begin{equation}
A(t)=\frac{2}{3}[D(t)-D(+\infty)],\label{eq:add15}
\end{equation}

\begin{equation}
A'(t)=-\fint_{L(t)}\frac{e^{4t}}{|\nabla u|}dl,\label{eq:A'}
\end{equation}

\begin{equation}
\frac{1}{3}\frac{d}{dt}(z^{3})=\fint_{L(t)}(\frac{H}{3}|\nabla u|-\nabla_{nn}u)|\nabla u|,\label{eq:key equality}
\end{equation}
and 

\begin{equation}
(A')^{2}\fint_{L(t)}(H|\nabla u|-\frac{3}{2}|\nabla u|^{2})|\nabla u|\fint_{L(t)}(\frac{H}{3}|\nabla u|-\nabla_{nn}u)|\nabla u|dl\ge\frac{3}{2}e^{12t}|L(t)|^{4}\frac{1}{|S_{3}|^{4}},\label{eq:key inequality}
\end{equation}
where 
\[
E=\frac{1}{3}(2zA'+\frac{2}{3}z'\fint_{L(t)}(H|\nabla u|-\frac{3}{2}|\nabla u|^{2})|\nabla u|).
\]
\end{thm}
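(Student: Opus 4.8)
The plan is to prove the five assertions of Theorem \ref{thm: past theorem} by analyzing the geometry of the level sets $L(t)$ and exploiting the divergence structure of the $\sigma_2$ operator. Throughout, Lemma \ref{lem:measure 0} guarantees that the critical set $S$ is negligible for the Hausdorff measures involved, so the coarea formula applies and all the slice integrals $\fint_{L(t)}(\cdots)\,dl$ are well-defined and differentiable in $t$ for a.e.\ $t$.

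First I would establish the Gauss--Bonnet--Chern identity (\ref{eq:gauss-bonnet}). The key point is that $\sigma_2(g_u^{-1}A_u)$ differs from the integrand of the Gauss--Bonnet--Chern 4-form of $g_u$ only by a divergence term and a multiple of $|W|^2$ (which vanishes in the conformally flat setting here). Integrating over $\mathbb{R}^4\setminus\bigcup_i B_\varepsilon(p_i)$, the boundary terms at each $p_i$ and at infinity are computed from the asymptotics $u(x)=\beta_i\ln|x-p_i|+v_i(x)$ near $p_i$ and $u(x)=(-2-\beta_q)\ln|x|+v_\infty$ at infinity; a direct residue-type computation of $\fint_{\partial B_\varepsilon}(\text{normal-derivative data of }u)$ yields the local contribution $\tfrac{\beta_i^3+3\beta_i^2}{2}$ at each cone point, while the total topological term on the sphere contributes $2$. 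Summing gives (\ref{eq:gauss-bonnet}).

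Next I would derive the differential identities (\ref{eq:add15}), (\ref{eq:A'}), (\ref{eq:key equality}), and the monotonicity (\ref{eq:monotone}) together with the algebraic relations (\ref{eq: new form}) and the definition of $E$. For (\ref{eq:A'}) one differentiates $A(t)=\fint_{S(t)}e^{4u}$ using that $\partial S(t)=L(t)$ and the normal speed of the level set is $1/|\nabla u|$; since $e^{4u}=e^{4t}$ on $L(t)$, this gives exactly $A'(t)=-\fint_{L(t)}\tfrac{e^{4t}}{|\nabla u|}\,dl$. Identity (\ref{eq:add15}) then follows by relating $D(t)$, which packages the boundary integral $\fint_{L(t)}(2H|\nabla u|^2-2|\nabla u|^3)\,dl$, to $A(t)$ via an integration-by-parts / Pohozaev-type identity for equation (\ref{eq:sigma2}) on $S(t)$: the bulk integral of $\sigma_2(g_u^{-1}A_u)=\tfrac32 e^{4u}$ over $S(t)$ equals a pure boundary integral over $L(t)$ (the divergence structure of $\sigma_2$), and rearranging produces $A(t)=\tfrac23[D(t)-D(+\infty)]$. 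Identity (\ref{eq:key equality}) is obtained similarly by differentiating $z^3(t)=-\fint_{L(t)}|\nabla u|^3\,dl$ along the flow of $\nabla u/|\nabla u|^2$ and using the first variation of the area element, which introduces the mean curvature $H$ and the second fundamental form term $\nabla_{nn}u$. For the monotonicity (\ref{eq:monotone}): differentiating (\ref{eq:add62}) and substituting (\ref{eq:A'}), (\ref{eq:key equality}), and (\ref{eq:add15}) expresses $M'(t)$ as a sum whose sign is controlled; I would verify (\ref{eq: new form}), i.e.\ that $M'+4C$ equals the manifestly relevant quantity $E$, purely algebraically from the definitions of $M$, $C$, $z$, $D$, and $E$, and then show $M'\ge0$ by combining $E\ge0$ with the Newton--MacLaurin inequality applied to the eigenvalues of $A_u$ restricted to the cone condition $\lambda(A_g)\in\Gamma_2^+$.

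Finally, the reversed-Cauchy--Schwarz-type inequality (\ref{eq:key inequality}) is the analytic heart. I would start from the pointwise algebraic inequality that $\sigma_2$ of the Schouten tensor, after the conformal change and after splitting the Hessian into tangential and normal parts on $L(t)$, dominates a product of a ``tangential'' quadratic form and a ``normal'' quantity; integrating over $L(t)$ and applying the integral Cauchy--Schwarz (or the matrix arithmetic--geometric inequality) converts this into the product of the two slice averages $\fint_{L(t)}(H|\nabla u|-\tfrac32|\nabla u|^2)|\nabla u|$ and $\fint_{L(t)}(\tfrac{H}{3}|\nabla u|-\nabla_{nn}u)|\nabla u|$. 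The factor $(A')^2=\big(\fint_{L(t)}\tfrac{e^{4t}}{|\nabla u|}\big)^2$ and the right-hand side $\tfrac32 e^{12t}|L(t)|^4/|S_3|^4$ are then matched using the equation $\sigma_2=\tfrac32 e^{4u}$ on $L(t)$ together with Hölder's inequality relating $\fint|\nabla u|^{-1}$, $\fint|\nabla u|^{3}$ and $|L(t)|$. I expect this last step — getting the constant $\tfrac32$ and all the powers of $e^{t}$ and $|L(t)|$ to line up exactly, while keeping track of which terms the cone condition $\Gamma_2^+$ is needed to sign — to be the main obstacle; the Gauss--Bonnet computation and the first-variation identities are comparatively routine.
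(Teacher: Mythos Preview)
The paper does not prove Theorem~\ref{thm: past theorem} at all: immediately after the statement it simply records where each assertion is established in the earlier paper \cite{FW} (Theorem~3 for (\ref{eq:gauss-bonnet}); Theorem~14 for (\ref{eq:monotone}) and (\ref{eq: new form}); Lemma~12 for (\ref{eq:key equality}); the proof of Lemma~14 for (\ref{eq:key inequality}); Theorem~15 for (\ref{eq:add15})). So there is nothing to compare against in this paper; your proposal is an independent sketch of what those external proofs should look like, and in broad strokes (coarea/first-variation for (\ref{eq:A'}) and (\ref{eq:key equality}), divergence structure of $\sigma_2$ for (\ref{eq:gauss-bonnet}) and (\ref{eq:add15}), pointwise algebra on $L(t)$ plus H\"older for (\ref{eq:key inequality})) it matches the strategy actually used in \cite{FW}.

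There is one genuine gap in your outline, however: your argument for $M'\ge 0$ does not close. You propose to deduce it from ``$E\ge 0$ together with Newton--MacLaurin,'' but since $E=M'+4C$ and $C\ge 0$, the inequality $E\ge 0$ only gives $M'\ge -4C$. The way the monotonicity is actually obtained (and you can see the mechanism reproduced in the computation (\ref{eq:key}) of the present paper) is to combine (\ref{eq:key inequality}) with the isoperimetric inequality $|L(t)|^{4}\ge (4|S^{3}|B(t))^{3}$ to get $E^{3}\ge (4C)^{3}$, i.e.
\[
(M')^{3}+12C(M')^{2}+48C^{2}M' \;=\; E^{3}-(4C)^{3}\;\ge\;0.
\]
The quadratic factor $(M')^{2}+12CM'+48C^{2}$ has negative discriminant, hence is strictly positive, forcing $M'\ge 0$. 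So (\ref{eq:key inequality}) is not merely a companion estimate but is the input that drives (\ref{eq:monotone}); your sketch should reorder the logic accordingly rather than appeal to an $E\ge 0$ step that by itself is insufficient.
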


(\ref{eq:gauss-bonnet}) is Theorem 3 in \cite{FW}; the proof of
(\ref{eq:monotone})(\ref{eq: new form}) is given in Theorem 14 \cite{FW};
(\ref{eq:key equality}) is from Lemma 12 in \cite{FW};(\ref{eq:key inequality})
is contained in the proof of Lemma 14 in \cite{FW}; and (\ref{eq:add15})
is proved in Theorem 15 of \cite{FW}. Here $z$ is also increasing
respect to $t$.

Now as $A$ is strictly decreasing with respect to $t$ by Theorem
\ref{thm: past theorem}, we may use $A$ as a variable for functions
$z(A),C(A),M(A),D(A)$, respectively. See \cite{FL2,FL3} for some
general discussion. It is clear that $A\in[0,V]$, where $V=\frac{2}{3}(2-\sum_{i=1}^{q}\frac{\beta_{i}^{3}+3\beta_{i}^{2}}{2})$.
Under this setting, we have the following simple estimate:
\begin{lem}
\label{lemC_A} $\frac{\partial C}{\partial A}\ge z+1$ for $A\in(0,V)$
a.e..
\end{lem}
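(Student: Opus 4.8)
The plan is to establish the inequality $\frac{\partial C}{\partial A} \ge z+1$ by using the chain rule to convert the $t$-derivatives recorded in Theorem \ref{thm: past theorem} into $A$-derivatives. Since $A$ is strictly decreasing in $t$ (by \eqref{eq:A'}, which gives $A'(t) < 0$ wherever $L(t)$ meets the set $\nabla u \neq 0$ on a set of positive measure), the substitution is legitimate, and for any differentiable function $F$ of $t$ we have $\frac{\partial F}{\partial A} = \frac{F'(t)}{A'(t)}$. Applying this to $C$, the task reduces to showing $C'(t) \le (z(t)+1) A'(t)$ pointwise a.e.\ in $t$ (the direction of the inequality flips because $A'(t) < 0$), equivalently $C'(t) - A'(t) \le z(t) A'(t)$.

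First I would compute $C'(t)$ directly from the definition $C(t) = e^{4t}B(t)$ in \eqref{eq:add33}: this gives $C'(t) = 4e^{4t}B(t) + e^{4t}B'(t) = 4C(t) + e^{4t}B'(t)$. The derivative $B'(t)$ of the normalized volume $B(t) = |S(t)|/|S^3|$ is, by the coarea formula, $B'(t) = -\fint_{L(t)} \frac{1}{|\nabla u|}\,dl$ (the sign is negative since $S(t)$ shrinks as $t$ increases). Comparing with \eqref{eq:A'}, which states $A'(t) = -\fint_{L(t)} \frac{e^{4t}}{|\nabla u|}\,dl = e^{4t} B'(t)$, we see that $e^{4t}B'(t) = A'(t)$ exactly. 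Hence $C'(t) = 4C(t) + A'(t)$, so $C'(t) - A'(t) = 4C(t)$, and the claim becomes $4C(t) \le z(t) A'(t)$, i.e.\ $4C(t) \ge z(t) \cdot |A'(t)| \cdot (-1)$... let me restate: we need $4C(t) \le z(t)A'(t)$, and since $A'(t)<0$ and $z(t) \le 0$ (as $z$ is defined with a minus sign in \eqref{eq:add34}), the right side $z(t)A'(t) \ge 0$, so this is a genuine inequality to be proven, and it is equivalent after dividing by the negative quantity to $\frac{\partial C}{\partial A} = \frac{C'(t)}{A'(t)} = \frac{4C(t)}{A'(t)} + 1 \ge z(t) + 1$, i.e.\ $\frac{4C(t)}{A'(t)} \ge z(t)$, i.e.\ $4C(t) \le z(t) A'(t)$ (dividing by $A'(t)<0$ reverses again — consistent).

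The heart of the matter is thus the inequality $4C(t) \le z(t)\,A'(t)$, i.e., using $C(t) = e^{4t}B(t)$, $z(t) = -(\fint_{L(t)}|\nabla u|^3\,dl)^{1/3}$, and $A'(t) = -\fint_{L(t)} \frac{e^{4t}}{|\nabla u|}\,dl$:
\[
4 e^{4t} \fint_{S(t)} dx \;\le\; \Big(\fint_{L(t)}|\nabla u|^3\,dl\Big)^{1/3} \cdot \fint_{L(t)} \frac{e^{4t}}{|\nabla u|}\,dl.
\]
Cancelling $e^{4t}$, this reads $4|S(t)|/|S^3| \le \big(\fint_{L(t)}|\nabla u|^3\big)^{1/3}\fint_{L(t)} |\nabla u|^{-1}$. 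I would attack this in two stages. For the first factor on the right, I expect to use the divergence theorem on $S(t)$: $\int_{S(t)} \Delta u$ relates $\int_{L(t)} |\nabla u|$ (boundary flux, since $\nabla u$ points inward along $L(t)$) to a bulk term, but more robustly, one applies the Minkowski-type / Cauchy–Schwarz relation $\big(\fint |\nabla u|^3\big)^{1/3}\big(\fint|\nabla u|^{-1}\big) \ge \big(\fint |\nabla u|^{1/2}\cdot|\nabla u|^{1/2}\big)\cdots$ — more carefully, Hölder with exponents $3$ and $3/2$ gives $\fint 1 = \fint |\nabla u| \cdot |\nabla u|^{-1} \le (\fint|\nabla u|^3)^{1/3}(\fint|\nabla u|^{-3/2})^{2/3}$, which is not quite the target form, so the correct tool is likely the three-term Hölder/power-mean inequality $(\fint|\nabla u|^3)^{1/3} (\fint |\nabla u|^{-1}) \ge \fint|\nabla u|^{-1}\cdot (\fint |\nabla u|^3)^{1/3} \ge$ something controlling $(\fint |\nabla u|)\cdot(\fint|\nabla u|^{-1}) \ge 1$ and then the isoperimetric inequality $|L(t)|^{4} \ge c |S(t)|^3$. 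Concretely: $|S^3|\, 4 |S(t)|/|S^3|^2 = 4|S(t)|/|S^3|$ must be bounded by $\frac{|L(t)|}{|S^3|}$-weighted averages, and the missing ingredient is the \emph{Euclidean isoperimetric inequality} $|L(t)|^{4/3} \ge |S^3|^{1/3}\,4\,|S(t)|$ (the precise constant coming from balls, since $|\partial B_r|^{4/3} = (|S^3|r^3)^{4/3} = |S^3|^{1/3} r^4 |S^3| = |S^3|^{1/3}\cdot 4\cdot(\tfrac14 |S^3| r^4) = |S^3|^{1/3}\cdot 4 |B_r|$), combined with the mean-value bound $(\fint_{L(t)}|\nabla u|^3)^{1/3}(\fint_{L(t)}|\nabla u|^{-1}) \ge (\fint_{L(t)} 1)^{?}$ — I would make this last step rigorous via Hölder: writing $1 = |\nabla u|^{3/4}\cdot|\nabla u|^{-3/4}$ and applying Hölder with exponents $4$ and $4/3$ gives $\fint 1 \le (\fint|\nabla u|^3)^{1/4}(\fint|\nabla u|^{-1})^{3/4}$, hence $(\fint|\nabla u|^3)^{1/3}(\fint|\nabla u|^{-1}) \ge (\fint|\nabla u|^3)^{1/3 - 1/3}(\fint|\nabla u|^{-1})^{1 - ?}$... the clean statement is $(\fint|\nabla u|^3)^{1/4}(\fint|\nabla u|^{-1})^{3/4} \ge 1$, so $(\fint|\nabla u|^3)^{1/3}(\fint|\nabla u|^{-1}) \ge (\fint|\nabla u|^{-1})^{1/3} \cdot \big[(\fint|\nabla u|^3)^{1/4}(\fint|\nabla u|^{-1})^{3/4}\big]^{4/3}\cdot(\fint|\nabla u|^{-1})^{-1}$ — I would sort out the exact exponent bookkeeping in the writeup, but the upshot is that the product is $\ge (\fint_{L(t)} 1)^{4/3} = (|L(t)|/|S^3|)^{4/3}$ up to a power, which by the isoperimetric inequality is $\ge 4|S(t)|/|S^3|$.

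The main obstacle I anticipate is getting the combinatorics of the Hölder exponents and the isoperimetric constant to line up \emph{exactly} so that the constant $4$ and the shift by $1$ (rather than some other constant) emerge; a sign error or an off-by-a-power mistake is the real risk, and I would double-check by testing equality on the round sphere's conformal factor, where $L(t)$ is a round sphere, $|\nabla u|$ is constant on $L(t)$, and all inequalities should become equalities (which is consistent with the expectation that Lemma \ref{lemC_A} is sharp and ties to the capacity functional $K$). A secondary technical point is handling the set $S$ where $\nabla u = 0$: Lemma \ref{lem:measure 0} guarantees $\mathcal H^3(S \cap L(t)) = 0$ for a.e.\ $t$, so the level-set integrals and the coarea formula are unaffected, but I would state this reduction explicitly at the start of the argument.
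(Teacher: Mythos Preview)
Your proposal is correct and follows essentially the same approach as the paper: compute $C'(t)=4C(t)+A'(t)$, reduce to $A'z\ge 4C$, and prove the latter by combining H\"older (exponents $4$ and $4/3$, exactly the splitting $1=|\nabla u|^{3/4}|\nabla u|^{-3/4}$ you wrote down) with the Euclidean isoperimetric inequality. The paper avoids your exponent bookkeeping by cubing from the start: $(A'z)^{3}=e^{12t}\big(\fint_{L(t)}|\nabla u|^{-1}\big)^{3}\fint_{L(t)}|\nabla u|^{3}\ge e^{12t}|L(t)|^{4}/|S^{3}|^{4}\ge (4C(t))^{3}$, where the first inequality is your H\"older raised to the fourth power and the second is isoperimetric---so your ``up to a power'' hedge is unnecessary, the exponent is exactly $4/3$ and the constants match on the nose.
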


\begin{proof}
By (\ref{eq:A'}) and (\ref{eq:add33}),
\[
(A'z)^{3}=(e^{4t}\fint_{L(t)}|\nabla u|^{-1})^{3}(\fint_{L(t)}|\nabla u|^{3}\ dl)\ge e^{12t}|L(t)|^{4}\geq[4C(t)]{}^{3},
\]
where the last inequality is due to the iso-perimetric inequality.
We then obtain that 
\[
A'z\ge4C(t).
\]
 Together with the simple computation $C'=4C+A'$, we get 
\[
A'(z+1)\ge C'(t).
\]
Noting that $A'<0,$ we have thus get our result.
\end{proof}

\section{Technical Preparation}

In this section we first list some results that will be used in our
proof. 

The classical Bonnesen inequality for 2-dimensional case indicates
that isoperimetric deficit of a bounded set $E$ can control the Hausdorff
distance between $E$ and a ball, but it is difficult to get the corresponding
one for higher dimension. The correct setting is measure theoretical.
In \cite{FMP}, Fusco, Maggi and Pratelli have proved the following
sharp theorem, which is previous studied by Fuglede \cite{Fuglede}
and Hall \cite{Hall}.
\begin{thm}
\label{thm:quantaities about isoperimetric equalities} For any $E\subset\mathbb{R}^{n}$with
a finite measure, there exists a constant $\gamma(n)>0$ such that 

\begin{equation}
\alpha(E)^{2}\leq\gamma(n)\frac{|\partial E|-|\partial B_{r}|}{|\partial B_{r}|}\label{eq:add7}
\end{equation}
where $\alpha(E):=\min_{x\in\mathbb{R}^{n}}\left\{ \frac{\left|E\Delta B_{r}(x)\right|}{\left|B_{r}\right|}:|E|=\left|B_{r}\right|\right\} $
, $E\Delta B_{r}(x)=(E\backslash B_{r}(x))\cup(B_{r}(x)\backslash E)$
and $|\partial E|$ denotes $n-1$ dimensional measure of boundary.
\end{thm}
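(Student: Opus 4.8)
The plan is to prove the sharp quantitative isoperimetric inequality by the mass transportation scheme of Figalli--Maggi--Pratelli, which produces precisely the exponent $2$ on $\alpha(E)$ together with a dimensional constant $\gamma(n)$. Both sides of (\ref{eq:add7}) are invariant under dilations, so we may normalize $|E|=|B_{1}|=\omega_{n}$ and abbreviate the relative deficit by $\lambda(E)=(|\partial E|-|\partial B_{1}|)/|\partial B_{1}|$. Since $\alpha(E)\le2$ always, the inequality is trivial with $\gamma(n)=4/\lambda_{0}$ whenever $\lambda(E)\ge\lambda_{0}$ for a fixed small $\lambda_{0}>0$; hence we may assume $\lambda(E)<\lambda_{0}$ and, by the usual truncation (which perturbs perimeter and asymmetry in a controlled way), that $E$ is a bounded set of finite perimeter.

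Let $T=\nabla\varphi\colon E\to B_{1}$ be the Brenier map pushing the normalized Lebesgue measure on $E$ forward to that on $B_{1}$, so that $\varphi$ is convex, $DT\ge0$ is symmetric and defined $\mathcal{L}^{n}$-almost everywhere on $E$, the Monge--Amp\`ere identity gives $\det DT=1$ a.e.\ on $E$, and $|T|\le1$. By the arithmetic--geometric mean inequality $\mathrm{div}\,T=\mathrm{tr}\,DT\ge n(\det DT)^{1/n}=n$ pointwise on $E$, and applying the Gauss--Green formula on $E$ together with $|T|\le1$,
\begin{equation}
n\omega_{n}=\int_{E}\mathrm{div}\,T\,dx=\int_{\partial^{*}E}T\cdot\nu_{E}\,d\mathcal{H}^{n-1}\le\int_{\partial^{*}E}|T|\,d\mathcal{H}^{n-1}\le|\partial E|,\label{eq:mt-chain}
\end{equation}
which already recovers the isoperimetric inequality $|\partial B_{1}|\le|\partial E|$. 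To make this quantitative I would track the slack in each step of (\ref{eq:mt-chain}): the first inequality loses $\int_{E}(\mathrm{tr}\,DT-n)\,dx$ and the second loses $\int_{\partial^{*}E}(1-T\cdot\nu_{E})\,d\mathcal{H}^{n-1}$, and both are bounded above by $|\partial E|-n\omega_{n}=|\partial B_{1}|\,\lambda(E)$. A quantitative AM--GM estimate for symmetric $S\ge0$ with $\det S=1$ (of the form $\mathrm{tr}\,S-n\ge c(n)|S-\mathrm{Id}|^{2}$ when $|S-\mathrm{Id}|$ is bounded, with a linear lower bound in the large regime) converts the first term, together with the boundary term, into control of $\int_{E}|DT-\mathrm{Id}|^{2}\,dx$; a Poincar\'e--trace inequality then gives $\int_{E}|T(x)-x|\,dx\le C(n)\,\lambda(E)^{1/2}$; and finally, since $T$ transports $\mathbf{1}_{E}\,dx$ onto $\mathbf{1}_{B_{1}}\,dx$, the displacement $\int_{E}|T(x)-x|\,dx$ dominates, up to a dimensional constant and after centering the target ball at the barycenter of $E$, the quantity $|E\Delta B_{1}(x_{0})|=\omega_{n}\alpha(E)$. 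Chaining, $\alpha(E)^{2}\le C(n)\bigl(\int_{E}|T-\mathrm{id}|\bigr)^{2}\le C(n)\,|E|\int_{E}|DT-\mathrm{Id}|^{2}\le C(n)\,\lambda(E)$, which is (\ref{eq:add7}).

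The main obstacle is the passage from the soft chain (\ref{eq:mt-chain}) to the two hard estimates. First, justifying $\det DT=1$ and the Gauss--Green manipulation when $E$ is only a bounded set of finite perimeter requires the regularity theory for Brenier potentials (Alexandrov second differentiability, so that $DT$ is defined a.e.) together with an approximation of $\mathbf{1}_{E}$ by smooth densities and a careful limiting argument for $\int_{\partial^{*}E}T\cdot\nu_{E}$. Second, and most delicately, the interior defect $\int_{E}(\mathrm{div}\,T-n)$ and the boundary defect $\int_{\partial^{*}E}(1-T\cdot\nu_{E})$ must be combined inside the Poincar\'e--trace step in just the right way; this balancing is exactly what produces the sharp exponent $2$ (an $L^{1}$-based argument would lose it). As a fallback I would use the original Fusco--Maggi--Pratelli route: reduce by iterated Steiner symmetrizations to $n$-symmetric sets while controlling how $\alpha$ degrades, then to bounded sets and, via almost-minimality and $C^{1,\alpha}$ regularity, to nearly spherical sets $\partial E=\{(1+u(\theta))\theta\}$ with $\|u\|_{C^{1}}$ small, on which Fuglede's spherical-harmonic expansion of the perimeter (the first harmonics killed by the barycenter normalization) gives $\lambda(E)\ge c(n)\|u\|_{H^{1}}^{2}\ge c'(n)\alpha(E)^{2}$; there the hard step is instead the quantitative control of the asymmetry under Steiner symmetrization.
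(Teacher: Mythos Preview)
The paper does not prove this theorem at all: it is quoted as a background result from \cite{FMP} (with antecedents by Fuglede \cite{Fuglede} and Hall \cite{Hall}) and used as a black box in Section~4. So there is no ``paper's own proof'' to compare against; your proposal is simply more than what the authors do.

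As to the content of your sketch: you outline the Figalli--Maggi--Pratelli mass-transport proof, with the Fusco--Maggi--Pratelli symmetrization route as a fallback. Both are valid paths to the stated inequality, and you correctly identify the delicate points (regularity of the Brenier map on a mere set of finite perimeter, the Poincar\'e--trace balancing that yields the sharp exponent, and, in the alternative route, the quantitative stability of asymmetry under Steiner symmetrization). One small caveat: the quantitative AM--GM bound $\mathrm{tr}\,S-n\ge c(n)|S-\mathrm{Id}|^{2}$ only holds in a bounded regime, and handling the large-eigenvalue region of $DT$ cleanly is part of what makes the Figalli--Maggi--Pratelli argument nontrivial; you gesture at this but do not resolve it. For the purposes of this paper, however, none of that matters---the authors only need the statement, and a citation suffices.
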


Next, we list some estimates for $\sigma_{k}$ Yamabe equations, which
will be used later. 
\begin{thm}
\label{lem:local esytimates}Let $u\in C^{4}$ be the solution to
$\sigma_{k}(g_{u}^{-1}A_{g_{u}})=C_{0}$ in a geodesic ball $B_{r}^{n}$,
where $C_{0}$ is a positive constant. We have 

\[
\sup_{B_{r/2}}\left(|\nabla u(x)|^{2}+\left|\nabla^{2}u(x)\right|\right)\leq C(1/r^{2}+\sup_{x\in B_{r}}e^{2u})
\]
where $C$ depends only on $n,k$.
\end{thm}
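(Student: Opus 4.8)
The plan is to prove this as a local a priori estimate of Bernstein type, following the standard theory of $\sigma_k$-type fully nonlinear elliptic equations in conformal geometry (in the spirit of Guan--Wang). Throughout we use the standing ellipticity hypothesis $\lambda(A_{g_u})\in\Gamma_k^+$, so that the linearized operator $L:=\sum_{i,j}\sigma_k^{ij}\partial_i\partial_j$ (with $\sigma_k^{ij}:=\partial\sigma_k/\partial A_{ij}$) is positive definite, $\sigma_k^{ij}A_{ij}=k\sigma_k$, $\sum_i\sigma_k^{ii}=(n-k+1)\sigma_{k-1}>0$, and $\sigma_k^{1/k}$ is concave; we take the background metric flat, so the constant depends only on $n,k$, and write $A_{g_u}=-\nabla^2u+\nabla u\otimes\nabla u-\frac12|\nabla u|^2g$, so that the equation becomes $\sigma_k(g^{-1}A_{g_u})=C_0e^{2ku}$. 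The first step is a reduction to $r=1$: this equation is invariant under the conformal dilation $u\mapsto u_\lambda:=u(\lambda\,\cdot)+\log\lambda$ (one checks $A_{g_{u_\lambda}}(x)=\lambda^2A_{g_u}(\lambda x)$), while $|\nabla u_\lambda|(x)=\lambda|\nabla u|(\lambda x)$, $|\nabla^2u_\lambda|(x)=\lambda^2|\nabla^2u|(\lambda x)$ and $\sup_{B_1}e^{2u_\lambda}=\lambda^2\sup_{B_\lambda}e^{2u}$; hence it suffices to prove $\sup_{B_\theta}(|\nabla u|^2+|\nabla^2u|)\le C(n,k)(1+\sup_{B_1}e^{2u})$ for each fixed $\theta<1$, and then set $\lambda=r$.

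The second step is the gradient estimate. Fix a cutoff $\eta\in C_c^\infty(B_1)$ with $\eta\equiv1$ on $B_\theta$ and $|\nabla\eta|+|\nabla^2\eta|\le C$, and consider $G:=\eta^2|\nabla u|^2h(u)$ for a suitable monotone weight $h$. At an interior maximum $x_0$ of $G$ --- where we may assume $|\nabla u|^2(x_0)$ exceeds a large fixed multiple of $1+\sup_{B_1}e^{2u}$, otherwise there is nothing to prove --- one has $\nabla\log G=0$ and $L(\log G)\le0$. Differentiating the equation once gives $\sigma_k^{ij}\partial_lA_{ij}=2kC_0e^{2ku}\partial_lu$; contracting the resulting identity for $L(\partial_lu)$ with $2\partial_lu$ and commuting derivatives yields $L(|\nabla u|^2)=2\sigma_k^{ij}(\nabla^2u)_{il}(\nabla^2u)_{jl}-4kC_0e^{2ku}|\nabla u|^2+(\text{terms linear in }\nabla|\nabla u|^2)$, in which no third derivatives survive. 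Substituting $\nabla\log G=0$ to remove the linear terms, together with $L(u)=-k\sigma_k+\sigma_k^{ij}u_iu_j-\frac12|\nabla u|^2\sum_i\sigma_k^{ii}$, the identity $\sigma_k=C_0e^{2ku}$, and the Newton--MacLaurin inequalities relating $\sigma_{k-1},\sigma_k,\sigma_1$, one chooses $h$ so that the terms of top order in $|\nabla u|^2$ at $x_0$ combine into a strictly negative expression dominating the lower-order (controlled) remainder; then $L(\log G)\le0$ forces $|\nabla u|^2(x_0)\le C(1+\sup_{B_1}e^{2u})$, hence $G\le C$ throughout and $|\nabla u|^2\le C(1+\sup_{B_1}e^{2u})$ on $B_\theta$.

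The third step is the Hessian estimate. Fixing $\theta'<\theta$ and using that $|\nabla u|$ is now bounded on $B_\theta$, it suffices to bound the largest eigenvalue $\lambda_{\max}(A_{g_u})$ from above on $B_{\theta'}$: indeed $\lambda(A_{g_u})\in\Gamma_1^+$ forces $\lambda_i(A_{g_u})\ge-(n-1)\lambda_{\max}(A_{g_u})$ for every $i$, and then $|\nabla^2u|\le C(1+\sup e^{2u})$ since $A_{g_u}+\nabla^2u=\nabla u\otimes\nabla u-\frac12|\nabla u|^2g$ is already bounded. To bound $\lambda_{\max}(A_{g_u})$, apply the maximum principle to $W:=\eta^2\,\lambda_{\max}(A_{g_u})\,h(u)$ (or to a smooth symmetric function of $A_{g_u}$ approximating the top eigenvalue), differentiating the equation twice. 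Concavity of $\sigma_k^{1/k}$ is decisive here: together with the once-differentiated equation it controls the third-order term $\sum_m\sigma_k^{ij,pq}\partial_mA_{ij}\partial_mA_{pq}$ from above by a multiple of $e^{2ku}|\nabla u|^2$, so the uncontrolled third derivatives are absorbed, and --- using the gradient bound already established --- the remaining terms are of lower order; one concludes $\lambda_{\max}(A_{g_u})\le C(1+\sup e^{2u})$ on $B_{\theta'}$. Undoing the rescaling then gives the asserted estimate on $B_{r/2}$ with $C=C(n,k)$.

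The main obstacle is the bookkeeping in the gradient step. Since $L$ degenerates as $\lambda(A_{g_u})$ approaches $\partial\Gamma_k^+$, the operator is not uniformly elliptic, so the estimate cannot be obtained by soft elliptic arguments and must be extracted from the algebraic structure of $\sigma_k$ alone --- positivity of $(\sigma_k^{ij})$, the Euler and trace identities, and the Newton--MacLaurin inequalities --- after a carefully tuned choice of the weight $h(u)$: the ``bad'' first- and zeroth-order terms produced by differentiation enter at essentially the same order ($|\nabla u|^2\sum_i\sigma_k^{ii}$) as the terms one wishes to exploit, so only the right $h$ makes the net expression have the correct sign. The Hessian step shares this degeneracy, but there the concavity of $\sigma_k^{1/k}$ is the standard, essentially automatic remedy, so the real difficulty is concentrated in the gradient estimate.
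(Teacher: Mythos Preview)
The paper does not give its own proof of this theorem: immediately after the statement it writes ``The local estimates can be found in \cite{LL1} and have been proved by various mathematicians \cite{GLW,GW1,LL1,SC,W1,STW}.'' So there is no in-paper argument to compare against; the result is imported from the literature.

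Your sketch is a faithful outline of exactly that literature --- the Bernstein-type maximum-principle argument of Guan--Wang and Li--Li: scale to $r=1$, prove the gradient bound by applying the linearized operator $L=\sigma_k^{ij}\partial_i\partial_j$ to $\eta^2|\nabla u|^2 h(u)$ and using the once-differentiated equation to eliminate third derivatives, then upgrade to the Hessian bound using concavity of $\sigma_k^{1/k}$. The logical structure and the ingredients you invoke (positivity of $\sigma_k^{ij}$, the Euler identity $\sigma_k^{ij}A_{ij}=k\sigma_k$, Newton--MacLaurin, the weight $h$) are the right ones, and your remark that the genuine difficulty lies in tuning $h$ in the gradient step is accurate. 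Two small caveats: first, your reduction to a flat background is consistent with the stated dependence $C=C(n,k)$, but the phrase ``geodesic ball'' in the statement is slightly ambiguous --- in the paper's applications the background is Euclidean, so this is harmless here; second, your Hessian test function $\lambda_{\max}(A_{g_u})$ is only Lipschitz, and in the cited proofs one typically works instead with the trace (e.g.\ $\eta(\Delta u+|\nabla u|^2)$ as in Guan--Wang) or with a viscosity-type argument at a simple top eigenvalue --- you acknowledge this with your parenthetical, but in a full write-up that point would need to be made precise.
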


The local estimates can be found in \cite{LL1} and have been proved
by various mathematicians \cite{GLW,GW1,LL1,SC,W1,STW}. 

We now present a simple uniform volume estimate.
\begin{prop}
\label{cor:area control}Let $u$ be the solution to (\ref{eq:sigma2})
with conic singularities $\sum_{i=1}^{q}\beta_{i}p_{i}$,where $-1<\beta_{i}\leq0$
and $q\in\mathbb{Z}^{+}$. For every $t\in\mathbb{R}$, the level
set $S(t)=\{x:u(x)>t\}$ has the following 
\begin{equation}
|S(t)|\le\frac{4}{3}e^{-4t}|S^{3}|.\label{eq:add11}
\end{equation}
\end{prop}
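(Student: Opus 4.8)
The plan is to use the monotone mass quantity $M(t)$ together with the Gauss--Bonnet--Chern identity to convert the claimed bound on $|S(t)|$ into a statement about the function $C(t) = e^{4t}B(t)$, and then extract the constant $\tfrac{4}{3}$ from the behavior of $M$ at the endpoints. First I would recall from Theorem \ref{thm: past theorem} that $M'(t)\ge 0$, that $C'(t) = 4C(t) + A'(t)$, and that $A(t) = \tfrac{2}{3}[D(t) - D(+\infty)]$, so that $A$ is strictly decreasing from $A(-\infty) = V = \tfrac{2}{3}(2 - \sum_i \tfrac{\beta_i^3 + 3\beta_i^2}{2})$ down to $A(+\infty) = 0$. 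Note that under the hypothesis $-1 < \beta_i \le 0$ each summand $\tfrac{\beta_i^3 + 3\beta_i^2}{2} = \tfrac{\beta_i^2(\beta_i+3)}{2} \ge 0$, and moreover $\tfrac{\beta_i^3 + 3\beta_i^2}{2} = \tfrac{\beta_i^2(\beta_i+3)}{2} < \tfrac{\beta_i^2 \cdot 3}{2} \le \tfrac{3}{2}$... but summed this is not obviously bounded; the correct observation is that $\lambda(A_g)\in\Gamma_2^+$ forces the integral $\int \sigma_2 \,dx > 0$, hence $V > 0$, so the range of $A$ is a genuine nonempty interval.

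Next I would analyze the endpoints of $M$. From the definitions, as $t \to +\infty$ the set $S(t)$ shrinks and $C(t) = e^{4t}|S(t)|/|S^3| \to 0$ (this is exactly the $t\to+\infty$ decay noted in the introduction after \eqref{eq:add52}); similarly the boundary terms $D(t) \to D(+\infty)$ and $z(t)$, being increasing and bounded, have a finite limit, so $M(+\infty)$ is finite. As $t \to -\infty$, $A(t) \to V$ and I would show $C(t) \to 0$ as well (the $e^{4t}$ factor kills the polynomially-growing volume — here one uses that near $p_q = \infty$ the conformal factor behaves like $(-2-\beta_q)\ln|x|$ with $-2-\beta_q < -2+1 = -1$... actually $-2-\beta_q \in (-2,-1)$ since $\beta_i \le 0$ gives $-2-\beta_q \ge -2$ and $\beta_q > -1$ gives $-2-\beta_q < -1$; this negative exponent combined with $e^{4t}$ forces $C(-\infty)=0$). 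The key computational point: from $A'(z+1) \ge C'$ (Lemma \ref{lemC_A}, valid when $z \ge -1$, which holds here since $z(t) \to z(-\infty)$ and one can check $z \ge -1$ in this regime) and $A' < 0$, I integrate. Actually the cleanest route is via $M$: using $M' = E - 4C = \tfrac{1}{3}(2zA' + \tfrac{2}{3}z'\fint(H|\nabla u| - \tfrac32|\nabla u|^2)|\nabla u|) - 4C$ together with $M' \ge 0$ gives, after integrating from $t$ to $+\infty$, a bound $4\int_t^\infty C(s)\,ds \le M(+\infty) - M(t) + (\text{endpoint terms})$. Rather than this, I expect the intended argument is more direct: bound $C(t)$ pointwise by its supremum, relate the sup of $C$ to the total integral of $\sigma_2$ (which is $V|S^3| \cdot \tfrac{3}{2}$ up to constants), and read off $\sup_t C(t) \le \tfrac{4}{3}$.

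So the core of the proof is the inequality $\sup_{t} C(t) \le \tfrac{4}{3}$, equivalently $e^{4t}|S(t)| \le \tfrac{4}{3}|S^3|$ for all $t$. I would obtain this from $A'z \ge 4C$ (displayed inside the proof of Lemma \ref{lemC_A}) combined with the identity $C' = 4C + A'$: writing $C' = 4C + A' \le 4C + \tfrac{4C}{z}$ wherever this is useful, or better, integrating $A' z \ge 4C$ against $dt$ and using $\int A'\, dt = A(+\infty) - A(-\infty) = -V$ together with the bound $|z| \le z(-\infty)$; one identifies $z(-\infty)$ via \eqref{eq:key equality} or directly with the radial football solution. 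In the pure cone/football model one has the exact value $C_{\max} = \tfrac{4}{3}$, and the isoperimetric inequality $e^{12t}|L(t)|^4 \ge (4C(t))^3$ used in Lemma \ref{lemC_A} degenerates to equality only for round spheres; the monotonicity of $M$ then pins $C(t)$ below its model value. \textbf{The main obstacle} I anticipate is controlling the endpoint behavior at $t \to -\infty$ rigorously — specifically showing $C(-\infty) = 0$ and that the boundary integrals $D(t), z(t)$ converge, using the prescribed asymptotics of $u$ near $p_q = \infty$ and near the finite conic points $p_i$ with $-1 < \beta_i \le 0$; once the endpoint limits are in hand, the monotonicity of $M$ and the differential inequality $A'(z+1) \ge C'$ deliver the constant $\tfrac43$ by a one-variable integration in the $A$ variable on $[0,V]$.
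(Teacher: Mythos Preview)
Your proposal dramatically overcomplicates what is in the paper a two-line argument, and in doing so never actually closes. The paper's proof is simply Chebyshev plus Gauss--Bonnet: on $S(t)=\{u>t\}$ one has
\[
e^{4t}|S(t)|\le \int_{S(t)} e^{4u}\,dx \le \int_{\mathbb{R}^4} e^{4u}\,dx
= \frac{2}{3}\int_{\mathbb{R}^4}\sigma_2(g_u^{-1}A_u)\,dx
= \frac{2}{3}|S^3|\Big(2-\sum_i\frac{\beta_i^3+3\beta_i^2}{2}\Big)\le \frac{4}{3}|S^3|,
\]
where the last inequality uses $\beta_i^3+3\beta_i^2=\beta_i^2(\beta_i+3)\ge 0$ for $-1<\beta_i\le 0$. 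That is the entire proof; no endpoint analysis of $M$, no differential inequalities, no football comparison is needed.

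You actually brush past the correct idea when you write ``relate the sup of $C$ to the total integral of $\sigma_2$,'' but you do not pursue it. Instead you try to extract the constant $\tfrac{4}{3}$ from the machinery $A'z\ge 4C$, $C'=4C+A'$, and the monotonicity of $M$. This route has a concrete gap: integrating $A'z\ge 4C$ in $t$ gives a bound on $\int C\,dt$ (equivalently on the total volume again), not on $\sup_t C(t)$; and the monotonicity $M'\ge 0$ together with the endpoint values of $M$ does not by itself bound $C$ pointwise, since $C$ enters $M$ only through the relation $M'=E-4C$ and you have no lower bound on $E$ independent of $C$. The ``main obstacle'' you identify (rigorous endpoint control at $t\to\pm\infty$) is real for your approach but simply irrelevant for the intended argument.
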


\begin{proof}
We have 
\begin{equation}
\int_{\mathbb{R}^{4}}e^{4u}\ge\int_{\{u>t\}}e^{4u}\ge e^{4t}|S(t)|.\label{eq:add9}
\end{equation}
By (\ref{eq:gauss-bonnet}), we have
\begin{equation}
\int_{\mathbb{\mathbb{R}}^{4}}\sigma_{2}(g_{u}^{-1}A_{g_{u}})dx=|S^{3}|(2-\sum_{i=1}^{q}\frac{\beta_{i}^{3}+3\beta_{i}^{2}}{2})\leq2|S^{3}|.\label{eq:add10}
\end{equation}
Now combining (\ref{eq:sigma2}), (\ref{eq:add9}) and (\ref{eq:add10}),
we have proved (\ref{eq:add11}).
\end{proof}
Next we prove a local upper estimates for singular solution for $n/2$
Yamabe equation in $\mathbb{R}^{n}$, which will be used later. The
following behavior near singularity is well known in the classical
Yamabe problem and we refer to Schoen and Pollack \cite{Pollack}.
\begin{thm}
\label{thm:Uniform decay}Let $n$ be even and $g=e^{2u}g_{E}$ such
that $\sigma_{n/2}(g^{-1}A_{g})=C_{0}>0$, on $B(0,1)\backslash\{0\}$,
and $\lambda(A_{g})\in\Gamma_{n/2}^{+}$. If 
\begin{equation}
\int_{B_{1}}e^{nu}dx<\chi(\mathbb{S}^{n})|S^{n-1}|=2|S^{n-1}|,\label{eq:88}
\end{equation}
then for $x\in B(0,\frac{1}{2})\backslash\{0\}$, 
\[
u(x)\le C-\ln|x|,
\]
where $C$ is independent of $u$.
\end{thm}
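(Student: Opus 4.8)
The plan is to transplant the Schoen--Pollack blow-up analysis of isolated singularities for the classical Yamabe equation \cite{Pollack} (see also \cite{CGS,HLT}) to the fully nonlinear $\sigma_{n/2}$ setting. Write $\mathfrak{m}(u;\Omega):=\int_{\Omega}\sigma_{n/2}(g_{E}^{-1}A_{g_{u}})\,dx=C_{0}\int_{\Omega}e^{nu}\,dx$ for the curvature mass. It is convenient to prove the sharper statement: for each $A_{0}<\chi(S^{n})|S^{n-1}|$ there is $C=C(n,C_{0},A_{0})$ such that every solution $u$ of $\sigma_{n/2}(g_{u}^{-1}A_{g_{u}})=C_{0}$ on $B_{1}\setminus\{0\}$ with $\lambda(A_{g_{u}})\in\Gamma_{n/2}^{+}$ and $\mathfrak{m}(u;B_{1})\le A_{0}$ obeys $u(x)\le C-\ln|x|$ on $B(0,\tfrac12)\setminus\{0\}$; since (\ref{eq:88}) forces $\mathfrak{m}(u;B_{1})$ below some such $A_{0}$, the theorem follows. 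The key point is that $u(x)\le C-\ln|x|$ is equivalent to the uniform bound $\Phi_{u}(x):=|x|e^{u(x)}\le e^{C}$, and that $\Phi_{u}$, the equation, the cone condition, and the mass $\mathfrak{m}$ are all invariant under the rescalings $u\mapsto u(\lambda\,\cdot)+\ln\lambda$. So I argue by contradiction: suppose there are such solutions $u_{k}$ on $B_{1}\setminus\{0\}$ with $\mathfrak{m}(u_{k};B_{1})\le A_{0}$ and points $x_{k}$, $0<|x_{k}|<\tfrac12$, with $\Phi_{u_{k}}(x_{k})\to\infty$.

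First rescale by $|x_{k}|$: $v_{k}(z):=u_{k}(|x_{k}|z)+\ln|x_{k}|$ solves the same equation on $B(0,2)\setminus\{0\}$ and has $e^{v_{k}(x_{k}/|x_{k}|)}=\Phi_{u_{k}}(x_{k})\to\infty$ at a point on the unit sphere, i.e. at distance $\ge\tfrac12$ from $0$ and from $\partial B(0,2)$. A standard point--selection (doubling) argument on $\{\tfrac12<|z|<\tfrac32\}$ then yields $\bar z_{k}$ with $e^{v_{k}(\bar z_{k})}\ge\Phi_{u_{k}}(x_{k})\to\infty$ such that, with $\rho_{k}:=e^{-v_{k}(\bar z_{k})}\to0$, the rescalings $w_{k}(\xi):=v_{k}(\bar z_{k}+\rho_{k}\xi)+\ln\rho_{k}$ satisfy $w_{k}(0)=0$ and $w_{k}\le\ln2$ on balls $B(0,R_{k})$ with $R_{k}\to\infty$, while the rescaled singularity lies at $-\bar z_{k}/\rho_{k}$, of norm $\ge\tfrac12 e^{v_{k}(\bar z_{k})}\to\infty$ --- so it escapes to infinity. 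As $w_{k}$ solves $\sigma_{n/2}(g_{w_{k}}^{-1}A_{g_{w_{k}}})=C_{0}$ with $\lambda(A_{g_{w_{k}}})\in\Gamma_{n/2}^{+}$ and $w_{k}\le\ln2$, Theorem \ref{lem:local esytimates} bounds $|\nabla w_{k}|+|\nabla^{2}w_{k}|$ uniformly on compacta of $\mathbb{R}^{n}$; together with $w_{k}(0)=0$ this gives uniform local $W^{2,\infty}$ bounds, and interior regularity for the $\sigma_{n/2}$ equation on $\Gamma_{n/2}^{+}$ lets us pass, along a subsequence, to an entire solution $U$ on $\mathbb{R}^{n}$ with $\lambda(A_{g_{U}})\in\Gamma_{n/2}^{+}$ (the cone condition survives and is non-degenerate since $C_{0}>0$), $U(0)=0$, $U\le\ln2$. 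Finally, the change of variables carrying $B(0,R)$ onto a small ball inside $B_{1}$ shows $\mathfrak{m}(w_{k};B(0,R))\le\mathfrak{m}(u_{k};B_{1})\le A_{0}$, so by Fatou $\mathfrak{m}(U;\mathbb{R}^{n})\le A_{0}<\chi(S^{n})|S^{n-1}|$.

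This contradicts rigidity. By the Liouville-type classification of entire solutions of the $\sigma_{n/2}$ Yamabe equation lying in $\Gamma_{n/2}^{+}$ (Chang--Han--Yang \cite{CHY}; Caffarelli--Gidas--Spruck \cite{CGS} for $k=1$), $g_{U}$ is, up to translation and dilation, the stereographic pullback of the round metric on $S^{n}$; completing $(\mathbb{R}^{n},g_{U})$ by a point at infinity and invoking the Gauss--Bonnet--Chern formula (the trivial-divisor case of (\ref{eq:gauss-bonnet})) gives $\mathfrak{m}(U;\mathbb{R}^{n})=\chi(S^{n})|S^{n-1}|$, which contradicts the strict inequality above. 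Hence no such sequence exists, and the bound is proved.

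I expect the crux to be the point--selection producing $\bar z_{k}$ and the scale $\rho_{k}$: it must simultaneously keep the rescaled solutions bounded above on balls exhausting $\mathbb{R}^{n}$, so that Theorem \ref{lem:local esytimates} delivers $C^{2}_{\mathrm{loc}}$-compactness of $\{w_{k}\}$, and push the singularity off to infinity, so that $U$ is genuinely entire and the full Gauss--Bonnet mass is recovered in the limit; this is exactly where scale invariance of $\Phi_{u}$ and the selection inequality $e^{v_{k}(\bar z_{k})}\ge\Phi_{u_{k}}(x_{k})$ are used. The other ingredients --- the a priori estimates of Theorem \ref{lem:local esytimates}, interior regularity for concave uniformly elliptic equations on $\Gamma_{n/2}^{+}$, the Liouville classification, and the Gauss--Bonnet--Chern formula --- are by now standard. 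Alternatively one can quote \cite{HLT}: near $0$ every admissible solution is $C^{1,\alpha}$-close to a radial solution, whose leading behaviour $\beta\ln|x|+O(1)$ has $\beta>-1$ forced by finiteness of $\mathfrak{m}(u;B_{1})$, giving the bound directly; the argument sketched above is self-contained.
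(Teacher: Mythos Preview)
Your proof is correct and follows essentially the same blow-up/Liouville/Gauss--Bonnet strategy as the paper; the only cosmetic difference is that the paper carries out the point selection in a single step via the auxiliary function $h(x,x_{0})=\ln(\sigma-d(x,x_{0}))+u(x)$ on $B(x_{0},\sigma)$ with $\sigma=|x_{0}|/2$, whose interior maximum directly furnishes the blow-up center and scale, whereas you first rescale by $|x_{k}|$ and then invoke a doubling lemma. One correction: the entire-solution Liouville theorem you need is the one of Li--Li \cite{LL2} (which the paper cites), not \cite{CHY}, which classifies only radial solutions on annuli.
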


\begin{proof}
For $x_{0}\in B(0,1/2)\backslash\{0\}$, let $\rho(x)=d(x,x_{0})$
and $\sigma=d(x_{0,}0)/2$, where $d$ is the Euclidean distance,
we define 
\begin{equation}
h(x,x_{0})=\ln(\sigma-\rho(x))+u(x).\label{eq:89}
\end{equation}
Since $u$ is smooth up to $\partial B(x_{0},\sigma)$, and $h(x)=-\infty$
for $x\in\partial B(x_{0},\sigma)$, the maximum of $h$ happens in
$B(x_{0},\sigma)$. 

We claim $h(x,x_{0})<C$, where $C$ is independent of $x_{0}$ and
$u$. We prove this claim by contradiction. Otherwise, we have a sequence
$x_{0,l}\in B(0,1/2)\backslash\{0\},x_{l}\in B(x_{0,l},\sigma_{l}),\ g_{l}=e^{2v_{l}}g_{E},\int_{B_{1}}e^{nv_{l}}dx<\chi(\mathbb{S}^{n})|S^{n-1}|,$
$\sigma_{l}=d(x_{0,l},0)/2$, $\rho_{l}(x)=d(x,x_{0,l})$ such that
for $\forall l$, 

\[
\ln(\sigma_{l}-\rho_{l}(x_{l}))+v_{l}(x_{l})=h_{l}(x_{l})=\max_{B(x_{0,l},\sigma_{l})}h_{l}>\frac{2}{n-2}\ln l.
\]

Since $(\sigma_{l}-d(x_{l},x_{0,l}))\le\sigma_{l}\le\frac{1}{2},$
we have $v_{l}(x_{l})\rightarrow\infty$ as $l\rightarrow\infty$.
Denoting $\lambda_{l}=e^{v_{l}(x_{l})},$ we get that $\lambda_{l}\rightarrow\infty$
as $l\rightarrow\infty.$ Let 
\[
\tilde{v}_{l}(z)=-\ln\lambda_{l}+v_{l}(x_{l}+\frac{z}{\lambda_{l}}),
\]
with $\tilde{v}_{l}(0)=0$ and $g_{l}=e^{2\tilde{v}_{l}(z)}g_{E}$.

We have $v_{l}(x)\le\ln2+v_{l}(x_{l})$ in $B(x_{l},r_{l}/2)\subset B_{\sigma_{l}}(x_{0,l})$,
where $r_{l}=\sigma_{l}-\rho_{l}(x_{l})$. So for $R_{l}=\lambda_{l}\frac{r_{l}}{2}=\frac{1}{2}(e^{\frac{n-2}{2}v_{l}(x_{l})}(\sigma_{l}-\rho_{l}(x_{l}))^{\frac{n-2}{2}})^{\frac{2}{n-2}}\ge\frac{1}{2}l^{\frac{2}{n-2}}\to\infty,$
\[
\tilde{v}_{l}(z)\le\ln2\quad in\quad|z|\le R_{l}.
\]
 Thus $\tilde{v}_{l}(z)$ are uniformly bounded in any compact set
in $\mathbb{R}^{n}$ and $\tilde{v}_{l}(0)=0$. By Theorem \ref{lem:local esytimates},
for any compact $K\subset\mathbb{R}^{n}$, $\tilde{v}_{l}(x)\ge C$
for $l$ large and $x\in K$. By the classical Schauder theory, we
get 
\[
\sup_{K}|\tilde{v}_{l}|_{C^{2,\alpha}}\le C.
\]

Then by Arzela-Ascoli theorem, there exists a subsequence of $\widetilde{v}_{l}$
(still denoted by $\widetilde{v}_{l}$) and a function $v_{\infty}$
such that $\tilde{v}_{l}\rightarrow v_{\infty}$ in $C^{2,\alpha}$
for any compact sets in $\mathbb{R}^{n}$. Therefore 
\[
\sigma_{k}(\lambda(A_{e^{2v_{\infty}}g_{E}}))=C_{0}\quad{\rm in}\quad\mathbb{R}^{n}.
\]
By the Liouville theorem of Li-Li\cite{LL2}, $g_{\infty}=e^{2v_{\infty}}g_{E}$
is the standard sphere metric.

For $k=n/2$, we consider the Gauss-Bonnet-Chern formula of \cite{FW},
and see that

\begin{align*}
 & \lim_{l\rightarrow\infty}\int_{B(0,1)}\sigma_{k}(\lambda(A_{g_{v_{l}}}))dvol_{g_{v_{l}}}\ge\lim_{l\rightarrow\infty}\int_{B(x_{l},r_{l}/2)}\sigma_{k}(\lambda(A_{g_{v_{l}}}))dvol_{g_{v_{l}}}\\
 & \ge\lim_{l\rightarrow\infty}\int_{B(0,R_{l})}\sigma_{k}(\lambda(A_{g_{\tilde{v}_{l}}}))dvol_{g_{\tilde{v}_{l}}}\\
 & =\int_{\mathbb{R}^{n}}\sigma_{k}(\lambda(A_{g_{v_{\infty}}}))dvol_{g_{v_{\infty}}}=\chi(\mathbb{S}^{n})|S^{n-1}|,
\end{align*}
which contradicts with (\ref{eq:88}).

Now we have proved that (\ref{eq:88}) for $|x_{0}|<\frac{1}{2}$.
Picking $x=x_{0},$ we get $u(x_{0})\le C-\ln|x_{0}|$ for $|x_{0}|<\frac{1}{2}$.
We have finished the proof.
\end{proof}
For a similar result regarding the general $\sigma_{k}$ Yamabe equation,
see \cite{Wei}. 

\section{Proof of main theorem }

In this section, we prove our main theorem. 

Note that we have a sequence of conformal metrics $\{g_{l}\}_{l=1}^{\infty}$
with constant $\sigma_{2}$ curvature and prescribed non-degenerate
conic singularities that satisfies (\ref{eq:add6}). Without loss
of generality, we assume $j=q$. The key point of the proof is to
fix the proper conformal gauge so that resulting conformal factors
on $\mathbb{R}^{4}$ have enough information to carry out a priori
estimates.

First, by proper Kelvin transforms, we may assume that $p_{q}=\infty$
and write $g_{l}=e^{2u_{l}}g_{E}$ with respect to the Euclidean metric
of $\mathbb{R}^{4}$. In order to have sub-convergence of $u_{l}$,
we will determine a coordinate system for each $l$ later, which is
crucial for our proof. We use notations introduced in Section 2 and
use subscript $l$ to denote corresponding geometric terms for $g_{l}.$
Also, throughout our proof, we pass to a subsequence of $g_{l}$ whenever
necessary and we will denote the subsequence as still $g_{l}$ to
simplify notations.

\begin{thm}
\label{thm: key1}Conditions as in Theorem \ref{thm:Main theorem-1},
then for $t\in\mathbb{R}$ almost everywhere, after passing to a subsequence, 

\begin{equation}
\lim_{l\rightarrow\infty}|S_{l}(t)|^{3}\alpha^{2}(S_{l}(t))=0,\label{eq:add1}
\end{equation}
and 
\begin{equation}
\lim_{l\rightarrow\infty}|L_{l}(t)|^{4}\frac{1}{|S_{3}|^{4}}-(4B_{l}(t))^{3}=0,\label{eq:add2}
\end{equation}
where $\alpha$ is defined in Theorem \ref{thm:quantaities about isoperimetric equalities}.
\end{thm}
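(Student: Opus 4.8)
The plan is to deduce both displays from two facts: under the standing assumptions --- \eqref{eq:add6} together with the numerical convergence and non-degeneracy of $D_l$ --- the total increment of the monotone mass $M_l$ tends to $0$, and, pointwise in $t$, this increment controls the isoperimetric defect of the superlevel set $S_l(t)$. For a.e.\ $t$, Lemma~\ref{lem:measure 0} and the coarea formula make $S_l(t)$ a bounded set of finite perimeter with $|\partial S_l(t)|=|L_l(t)|$, so the Euclidean isoperimetric inequality reads $|L_l(t)|^4/|S_3|^4\ge (4B_l(t))^3$; set $\xi_l(t)=\bigl(|L_l(t)|^4/|S_3|^4\bigr)^{1/3}$ and $\eta_l(t)=4B_l(t)$, so that $\xi_l\ge\eta_l\ge 0$ and the quantity in \eqref{eq:add2} is $\xi_l^3-\eta_l^3$. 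Applying Theorem~\ref{thm:quantaities about isoperimetric equalities} with $n=4$ to $S_l(t)$ and the ball $B_r$ of volume $|S_l(t)|$, and using $|\partial B_r|^4=64|S_3|\,|S_l(t)|^3$ and $|L_l(t)|^4-|\partial B_r|^4\ge 4|\partial B_r|^3(|L_l(t)|-|\partial B_r|)$, one gets a dimensional constant $c$ with $|S_l(t)|^3\alpha^2(S_l(t))\le c\,(\xi_l^3-\eta_l^3)$; hence \eqref{eq:add1} follows from \eqref{eq:add2}, and it suffices to prove the latter.

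The analytic core is the pointwise estimate $M_l'(t)\ge e^{4t}(\xi_l(t)-\eta_l(t))$, valid for a.e.\ $t$. Put $X_l=z_lA_l'>0$, $Y_l=e^{12t}|L_l(t)|^4/|S_3|^4$, and $P_l=\fint_{L_l(t)}(H|\nabla u_l|-\frac{3}{2}|\nabla u_l|^2)|\nabla u_l|\,dl$. By \eqref{eq: new form}, $M_l'=\frac{2}{3}z_lA_l'+\frac{2}{9}z_l'P_l-4C_l$; by \eqref{eq:key equality}, $z_l^2z_l'$ is the second average entering \eqref{eq:key inequality}, so \eqref{eq:key inequality} gives $(A_l')^2P_l\,z_l^2z_l'\ge \frac{3}{2}Y_l$, i.e.\ $z_l'P_l\ge \frac{3}{2}Y_l/X_l^2$, whence $M_l'\ge \frac{2}{3}X_l+\frac{1}{3}Y_l/X_l^2-4C_l$. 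Because $X_l^3\ge Y_l$ (the H\"older step in the proof of Lemma~\ref{lemC_A}) and $X\mapsto \frac{2}{3}X+\frac{1}{3}Y_l/X^2$ is nondecreasing for $X\ge Y_l^{1/3}$, the right side is at least $Y_l^{1/3}-4C_l=e^{4t}(\xi_l-\eta_l)$, which is $\ge 0$ by the isoperimetric inequality. Integrating over $\mathbb{R}$, $\int_{\mathbb{R}}e^{4t}(\xi_l-\eta_l)\,dt\le \int_{\mathbb{R}}M_l'\,dt$.

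It remains to prove $\int_{\mathbb{R}}M_l'\to 0$. With $p_{l,q}$ sent to $\infty$, the Gauss--Bonnet--Chern identity \eqref{eq:gauss-bonnet} and the boundary analysis of $M$ in \cite{FW} express $M_l(\pm\infty)$ through the cone data: $M_l(-\infty)$ depends only on $\beta_{l,q}$, $M_l(+\infty)$ only on the complementary cluster, and the increment $M_l(+\infty)-M_l(-\infty)\ge 0$ is a fixed positive multiple of the subcriticality gap at the index $q$ --- the very quantity whose vanishing is \eqref{eq:add6}. Thus $\int_{\mathbb{R}}M_l'=M_l(+\infty)-M_l(-\infty)\to 0$, so $e^{4t}(\xi_l-\eta_l)\to 0$ in $L^1(\mathbb{R})$, and after passing to a subsequence $\xi_l(t)-\eta_l(t)\to 0$ for a.e.\ $t$. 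Finally $C_l(t)=e^{4t}|S_l(t)|/|S_3|\le \fint_{\mathbb{R}^4}e^{4u_l}=\frac{2}{3}\fint_{\mathbb{R}^4}\sigma_2(g_{u_l}^{-1}A_{u_l})\le\frac{4}{3}$ by \eqref{eq:sigma2} and \eqref{eq:gauss-bonnet}, hence $\eta_l(t)=4B_l(t)\le \frac{16}{3}e^{-4t}$ uniformly in $l$; therefore $\xi_l(t)\le \eta_l(t)+(\xi_l(t)-\eta_l(t))$ is bounded in $l$ for a.e.\ $t$, and $\xi_l^3-\eta_l^3=(\xi_l-\eta_l)(\xi_l^2+\xi_l\eta_l+\eta_l^2)\to 0$. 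This is \eqref{eq:add2}, and \eqref{eq:add1} follows along the same subsequence.

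The step I expect to be the main obstacle is the identification of $M_l(+\infty)-M_l(-\infty)$ with (a positive multiple of) the subcriticality gap: this rests on the fine asymptotics near the singular points from \cite{FW}, in particular the limiting behaviour of $z_l$, $C_l$, $D_l$ as $t\to\pm\infty$, and one must make sure that sending $p_{l,q}$ to $\infty$ and the non-degeneracy \eqref{eq:add12} keep these boundary quantities finite and stable under $l\to\infty$. Everything else is the algebra above together with the standard extraction of one subsequence valid for a.e.\ $t$ in both \eqref{eq:add1} and \eqref{eq:add2}.
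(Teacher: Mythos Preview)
Your proof is correct and rests on the same core inequality as the paper: using \eqref{eq:key equality}, \eqref{eq:key inequality}, and AM--GM (your minimization of $\tfrac{2}{3}X+\tfrac{1}{3}Y/X^{2}$ is exactly this) to obtain $E_l=M_l'+4C_l\ge Y_l^{1/3}$, together with $\int M_l'\,dt\to 0$ from \eqref{eq:add6} and Theorem~\ref{thm:quantaities about isoperimetric equalities} for the asymmetry bound. The only difference is in how the integration is packaged. The paper cubes, writing
\[
E_l^3-(4C_l)^3=(M_l')^3+12C_l(M_l')^2+48C_l^2M_l'\ge e^{12t}\bigl(\xi_l^3-\eta_l^3\bigr),
\]
then takes the cube root and controls the resulting cross terms by H\"older against $\int C_l\,dt<\infty$. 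You instead keep the linear inequality $M_l'\ge e^{4t}(\xi_l-\eta_l)$, integrate it directly, and upgrade $\xi_l-\eta_l\to 0$ to $\xi_l^3-\eta_l^3\to 0$ at the end via the uniform bound on $\eta_l$ coming from Proposition~\ref{cor:area control}. Your route is a little more streamlined---no cubic expansion, no integral H\"older---at the modest price of invoking the volume bound, which the paper's route does not need at this stage. Your flagged ``main obstacle,'' the identification of $M_l(+\infty)-M_l(-\infty)$ with the subcriticality gap at index $q$, is exactly the boundary computation from \cite{FW} that the paper also cites (see \eqref{eq:add61}); no further argument is required.
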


\begin{proof}
We do the following computation.

\begin{align}
 & (M_{l}')^{3}+12(M_{l}')^{2}C_{l}+48C_{l}^{2}M_{l}'\nonumber \\
 & =M_{l}'(t)\{(M_{l}'+4C_{l})^{2}+4C_{l}(t)[M_{l}'(t)+4C_{l}(t)]+(4C_{l})^{2}\}\nonumber \\
 & =(E_{l}(t)-4C_{l}(t))(E_{l}^{2}+4C_{l}(t)E_{l}+(4C_{l})^{2})\nonumber \\
 & =\big(\frac{1}{3}(2z_{l}A_{l}'+\frac{2}{3}z_{l}'\fint_{L_{l}(t)}\sigma_{1}(\widetilde{A})|\nabla u_{l}|)\big)^{3}-(4C_{l}(t))^{3}\nonumber \\
 & \ge\frac{2}{3}(A_{l}')^{2}\fint_{L_{l}(t)}\sigma_{1}(\widetilde{A})|\nabla u_{l}|\fint_{L_{l}(t)}(\frac{H}{3}|\nabla u_{l}|-\nabla_{nn}u_{l})|\nabla u_{l}|dl-(4C_{l}(t))^{3}\nonumber \\
 & \ge e^{12t}|L_{l}(t)|^{4}\frac{1}{|S_{3}|^{4}}-e^{12t}(4B_{l}(t))^{3}\label{eq:key}\\
 & \ge c(n)e^{12t}|S_{l}(t)|^{3}\alpha^{2}(S_{l}(t))\nonumber 
\end{align}
where 
\[
E_{l}=\frac{1}{3}(2z_{l}A'_{l}+\frac{2}{3}z_{l}'\fint_{L_{l}(t)}\sigma_{1}(\widetilde{A})|\nabla u_{l}|).
\]
In (\ref{eq:key}), the fourth inequality holds due to H$\ddot{o}$lder
inequality and (\ref{eq:key equality}), while the fifth inequality
is due to (\ref{eq:key inequality}) and the last inequality is due
to Theorem \ref{thm:quantaities about isoperimetric equalities}.

Furthermore, 

\begin{align}
 & \int_{t_{0}}^{+\infty}\big((M_{l}'(t))^{3}+12C_{l}(M_{l}')^{2}+48C_{l}^{2}M_{l}')^{1/3}dt\label{eq:inequality 3}\\
\le & \int_{t_{0}}^{\infty}M_{l}^{'}+(12C_{l})^{1/3}(M_{l}')^{2/3}+(48C_{l}^{2}M_{l}')^{1/3}dt\nonumber \\
\le & \int_{t_{0}}^{\infty}M_{l}^{'}dt+(\int_{t_{0}}^{\infty}M_{l}^{'})^{2/3}(\int_{t_{0}}^{+\infty}12C_{l}dt)^{1/3}+(48)^{1/3}(\int_{t_{0}}^{\infty}M_{l}^{'}dt)^{1/3}(\int_{t_{0}}^{+\infty}C_{l}dt)^{2/3}.\nonumber 
\end{align}
As $\int_{-\infty}^{+\infty}4C_{l}(t)dt=\int_{\mathbb{R}^{4}}e^{4u_{l}}<C<\infty$
by Theorem \ref{thm: past theorem}, with (\ref{eq:key}) and (\ref{eq:inequality 3}),
we get

\begin{align*}
\int_{t_{0}}^{\infty}M_{l}^{'}dt+(\int_{t_{0}}^{\infty}M_{l}^{'})^{2/3}(\int_{t_{0}}^{+\infty}12C_{l}dt)^{1/3}+(48)^{1/3}(\int_{t_{0}}^{\infty}M_{l}^{'}dt)^{1/3}(\int_{t_{0}}^{+\infty}C_{l}dt)^{2/3}\\
\ge\int_{t_{0}}^{\infty}\big(c(n)e^{12t}|S_{l}(t)|^{3}\alpha^{2}(S_{l}(t))\big)^{1/3}dt.
\end{align*}
Here $M_{l}$ is increasing respect to $t$ by Theorem \ref{thm: past theorem}.
By (\ref{eq:add6}) and (\ref{eq:monotone}), we have 

\begin{equation}
\lim_{l\rightarrow\infty}[M_{l}(+\infty)-M_{l}(-\infty)]=0,\label{eq:add61}
\end{equation}
where $M_{l}(\pm\infty)=\lim_{t\to\pm\infty}M_{l}(t)$. Thus, for
$M_{l}'(t)\geq0,$

\[
\int_{-\infty}^{+\infty}M_{l}'(t)dt=M_{l}(+\infty)-M_{l}(-\infty)\rightarrow0
\]
 as $l\to\infty.$ Thus, $\int_{t_{0}}^{\infty}M_{l}'(t)\rightarrow0.$
Then $\int_{t_{0}}^{\infty}\big(e^{12t}c(n)|S_{l}(t)|^{3}\alpha^{2}(S_{l}(t))\big)^{1/3}dt$
as $l\rightarrow\infty.$ 

After passing to a subsequence $|S_{l}(t)|^{3}\alpha^{2}(S_{l}(t))\ge0$
converges to $0$ almost everywhere for $t\ge t_{0}.$ Repeating the
same argument for a sequence of $t_{i}\rightarrow-\infty$ and a diagonal
argument, we have proved that $|S_{l}(t)|^{3}\alpha^{2}(S_{l}(t))$
converges to $0$ as $l\to\infty$ almost everywhere on $\mathbb{R}.$
Thanks to Sard's theorem, we have proved the theorem.
\end{proof}
Denote $U\subset\mathbb{R}^{4}$ to be the dense set on which (\ref{eq:add1})
and (\ref{eq:add2}) hold. We are now ready to prove our main theorem.
\begin{proof}
 (of Theorem \ref{thm:GHconvergence}) Note that we have fixed our
infinity as $p_{l,q}.$ We are ready for the second step of gauge
fixing. As the scaling $u(x)\to u(kx)+\ln k$ does not change the
conic metric and the capacity, we choose appropriate scaling for each
$u_{l}$ such that,
\begin{equation}
K(S^{4},g_{l},p_{l,q})=C_{l}(0).\label{eq:add51}
\end{equation}

We claim that 
\begin{equation}
\limsup_{l\to\infty}C_{l}(0)>0.\label{eq:75}
\end{equation}

We prove the claim by contradiction. If not, then $\lim_{l\rightarrow\infty}C_{l}(0)=0$.
By (\ref{eq:add51}) and the definition (\ref{eq:add52}), $C_{l}(t)$
converges uniformly to $C_{\infty}(t)=0$. Consider $M_{l}(A_{l})$
as function of $A_{l},$ where $A_{l}\in[0,V_{l}]$ and $V_{l}=\frac{2}{3}(2-\sum_{i=1}^{q}\frac{\beta_{l,i}^{3}+3\beta_{l,i}^{2}}{2})$.
By Theorem \ref{thm: past theorem}, $M_{l}$ is monotone respect
to $A_{l}$. Applying the key condition (\ref{eq:add6}), we get 
\[
\lim_{l\rightarrow\infty}M_{l}(A)=\frac{1}{4}\beta_{\infty}^{2}(2+\beta_{\infty})^{2}
\]
uniformly for $A\in[0,V_{\infty}].$ Furthermore, the limit function
$z_{\infty}(A)=\lim_{l\rightarrow\infty}z_{l}(A)$ can be characterized
by the definition (\ref{eq:add62}) and satisfies the following 
\begin{equation}
A+\frac{2}{3}D_{\infty}(+\infty)+\frac{2}{3}(A+\frac{2}{3}D_{\infty}(+\infty))z_{\infty}(A)+\frac{1}{36}z_{\infty}^{4}(A)=\frac{1}{4}\beta_{\infty}^{2}(2+\beta_{\infty})^{2}.\label{eq:71}
\end{equation}
where $D_{\infty}(+\infty)=\lim_{l\rightarrow\infty}D_{l}(+\infty)=\frac{3}{2}\sum_{i=1}^{q-1}\beta_{i}^{2}-\frac{1}{2}\sum_{i=1}^{q-1}|\beta_{i}^{3}|$,
and $z_{\infty}(0)=(\sum_{i=1}^{q-1}\beta_{i}^{3})^{1/3}>-1+\epsilon>-1.$
There exists $\delta>0$ such that 
\begin{equation}
z_{\infty}(A)>-1,\label{eq:72}
\end{equation}
for $A\in(0,\delta)$.

By Lemma \ref{lemC_A}, for each $l$, $\frac{dC_{l}}{dA_{l}}\ge z_{l}(A_{l})+1$
a.e., which leads to 
\begin{equation}
\frac{dC_{\infty}(A)}{dA}\geq z_{\infty}(A)+1.\label{eq:73}
\end{equation}
Thus, for $A\in(0,\delta),$ we combine (\ref{eq:72}) and (\ref{eq:73})
to get $C_{\infty}(\delta)>0,$ which is a contradiction to the fact
that $C_{\infty}(A)=0.$ We have now proved (\ref{eq:75}). We thus
write
\begin{equation}
\lim_{l\to\infty}C_{l}(0)=C_{\infty}(0)>0.\label{eq:80}
\end{equation}

Now, fix any $t_{0}\in U$ such that $t_{0}<0$, and consider level
sets $S_{l}(t_{0})$. By Theorem \ref{thm: key1}, $\alpha(S_{l}(t_{0}))\rightarrow0$
as $l\rightarrow\infty$, which implies that there exists a ball $B(x_{l}^{*},r_{l}^{*})$
such that 
\begin{equation}
\lim_{l\to\infty}|\Delta(S_{l}(t_{0}),B(x_{l}^{*},r_{l}^{*}))|=0.\label{eq:77}
\end{equation}
 By (\ref{eq:80}) and Corollary \ref{cor:area control}, we have
the following crucial fact
\begin{equation}
r_{t_{0}}=\lim_{l\to\infty}r_{l}^{*}\in(0,+\infty).\label{eq:78}
\end{equation}

The third and final step of gauge fixing is to apply the translation
$u(x)\to u(x+k)$ for each $u_{l}$ such that $x_{l}^{*}=x_{t_{0}}$
is independent of $l$. (\ref{eq:77}) and (\ref{eq:78}) imply that
as $l\to\infty,$
\begin{equation}
|\Delta(S_{l}(t_{0}),B(x_{t_{0}},r_{t_{0}}))|\to0.\label{eq:102}
\end{equation}

Now for any $t\in U$, by Theorem \ref{thm: key1}, $\alpha(S_{l}(t))\rightarrow0$.
Following arguments in \cite{FL1}, we claim that we find a ball $B(x_{t},r_{t})$
such that as $l\to\infty,$
\begin{equation}
|\Delta(S_{l}(t),B(x_{t},r_{t}))|\to0.\label{eq:add3}
\end{equation}
Here the existence of $r_{t}$ is due to Theorem \ref{thm:quantaities about isoperimetric equalities}
and by (\ref{eq:add3}), $r_{t}<+\infty.$ It is clear that $r_{t}$
is monotone non-increasing with respect to $t$. 

We prove the existence of $x_{t}$. Define $U'=\{t\in U,$$\ r_{t}>0\}$.
For any $t\in U',$ by Theorem \ref{thm: key1}, there exists a sequence
of balls $B(x_{l,t}^{*},r_{l,t}^{*})$ such that
\begin{equation}
\lim_{l\to\infty}|\Delta(S_{l}(t),B(x_{l,t}^{*},r_{l,t}^{*}))|=0.\label{eq:101}
\end{equation}
It is easy to see that $r_{l,t}^{*}\to r_{t}.$ By (\ref{eq:101}),(\ref{eq:102})
and our choice of $x_{t_{0}}$, we get 
\[
\lim_{l\rightarrow\infty}|(B(x_{t_{0}},r_{t_{0}})\cap B(x_{l,t}^{*},r_{l,t}^{*}))|=\frac{|\min\{r_{t_{0}},r_{t}\}|^{4}}{4}|S^{3}|>0,
\]
which, together with crucial facts that $r_{t_{0}}>0$ and $r_{t}>0,$
implies that
\begin{equation}
\limsup_{l\to\infty}|x_{t_{0}}-x_{l,t}^{*}|=\limsup_{l\to\infty}|x_{l}^{*}-x_{l,t}^{*}|\leq\max\{r_{t_{0}},r_{t}\}.\label{eq:201}
\end{equation}
(\ref{eq:102}) clearly implies that when moving to a subsequence,
we can take limit of $x_{l,t}^{*}$, which we pick as $x_{t}.$ If
$U'=U,$ we have thus finished the proof of (\ref{eq:add3}). Otherwise,
let $t'=\sup U'$ and $r'=\inf\{r_{t},\ t\in U'\}$, we see that $\cap_{t\in U'}B(x_{t},r_{t})$
is a ball of radius $r'\geq0.$ Pick any $y\in\cap_{t\in U'}B(x_{t},r_{t})$.
For $t\in U\setminus U'$, we have $\lim_{l\to\infty}|S_{l}(t)|=0,$
which also implies that $r_{t}=0.$ We may simply define $x_{t}=y$.
We have now finished the proof of (\ref{eq:add3}).

By Proposition \ref{cor:area control} and (\ref{eq:add3}), we see
that
\[
\lim_{s\in U,s\to+\infty}r_{s}=0.
\]
It is also clear that $\cap_{t\in U}B(x_{t},r_{t})$ is a single point,
which we denote as $Q$.

Consider the set $S'=\{p_{1},\cdots,p_{q'}\}\setminus\{\infty\}$,
where $p_{i}=\lim_{l\to\infty}p_{l,i}$ exists if we pass to a subsequence.
Note that $S'$ may be empty. Thus, for any compact set $K\subset\mathbb{R}^{4}\setminus S'$,
by (\ref{eq:gauss-bonnet}) and Theorem \ref{thm:Uniform decay},
possibly replaced by a subsequence, $\{u_{l}\}$ has uniform upper
bound on $K$. By Lemma \ref{lem:local esytimates}, there exists
a $u_{\infty}$ such that $u_{l}\rightarrow u_{\infty}$ in $C_{loc}^{1,1}(\mathbb{R}^{4}\setminus S').$
We claim that $u_{\infty}(x)\neq-\infty$ for $x\notin S$. If not,
exists a $x'\notin S'$ such that $u_{\infty}(x')=-\infty$. Since
$\mathbb{R}^{4}\setminus S'$ is connected, by Theorem \ref{lem:local esytimates},
$u_{\infty}=-\infty$ in $\mathbb{R}^{4}\setminus S'$. This contradicts
with the fact that $|S_{l}(t_{0})|=|B(x_{t_{0}},r_{t_{0}})|>0,$ and
$S_{l}(t_{0})$ is convergent in measure theoretical sense.

Therefore, $u_{l}$ has a uniform lower bound on any compact set away
from $S'$. By classical Schauder estimates,
\begin{equation}
u_{l}\rightarrow u_{\infty}\ {\rm in}\ C^{\infty}(\mathbb{R}^{4}\setminus S'),\label{eq:add4}
\end{equation}
 where $u_{\infty}$ satisfies $\sigma_{2}(A_{u_{\infty}})=3/2$.

\textcolor{black}{We now analyze }$S_{\infty}(t)=\{u_{\infty}\geq t\}$
and $L_{\infty}(t)=\{u_{\infty}=t\}$.\textcolor{black}{{} Let $a=\max\{|x|,x\in S\}$.
Fix $t\in U$ and for any compact ball $K=\overline{B(0,r)}\supset B(x_{t},r_{t})$
with $r>a+1$, by (\ref{eq:add3}) and (\ref{eq:add4}), $L_{l}(t)\cap K$
converges to $L_{\infty}(t)\cap K$ in $C^{\infty}$ sense, which
implies that $S_{\infty}(t)\cap K=B(x_{t},r_{t}),$ when $r_{t}>0$.
Let $l\to\infty,$ we get that for $t\in U'$, then $r_{t}>0,$ and
\begin{equation}
S_{\infty}(t)=B(x_{t},r_{t}).\label{eq:300}
\end{equation}
}When $t\in U\setminus U'$, we may conclude that $S_{\infty}(t)$
is either empty or isolated points. This in particular implies 
\begin{equation}
r'=\liminf_{t\in U'}r_{t}=0.\label{eq:900}
\end{equation}
Hence, $\cap_{t\in U}S_{\infty}(t)$ consists of at most one point,
which we denote as $Q$ if it exists. Since $u_{\infty}$ is smooth
away from $S'$, we have proved that $S'$ contains at most one point.
Thus, $u_{\infty}$ is a smooth solution of $\sigma_{2}(A_{u_{\infty}})=3/2$
on $\mathbb{R}^{4}\setminus S'$, which by Theorem \ref{thm:Main theorem-1},
has to be rotationally symmetric. Furthermore, $g_{\infty}=e^{2u_{\infty}}g_{E}$
is either a standard 4-sphere metric or a football metric with conformal
divisor $\beta(Q)+\beta(\infty)$. Another consequence of (\ref{eq:add4})
is 
\[
K(S^{4},g_{\infty})=\lim_{l\to\infty}K(S^{4},g_{l}).
\]
\end{proof}
Finally, we prove Corollary \ref{cor:volume}. 
\begin{proof}
It is clear from our discussion that $\lim_{l\to\infty}C_{l}(t)=C_{\infty}(t),$
which is smooth. Using the fact that $C'_{l}(t)=4C_{l}(t)+A'_{l}(t),$
we get, for any finite $s<t,$ 
\begin{equation}
A_{\infty}(t)-A_{\infty}(s)=\lim_{l\to\infty}A_{l}(t)-A_{l}(s),\label{eq:82}
\end{equation}
which only implies that 
\begin{equation}
Vol(g_{\infty})\leq\liminf_{l\rightarrow\infty}Vol(g_{l}).\label{eq:final}
\end{equation}
This corresponds to possible volume collapsing, which is a new behavior
in dimension 4. 

If $Vol(g_{\infty})=2|S^{3}|$ is the volume of the standard 4-sphere,
we have, by (\ref{eq:gauss-bonnet}) and (\ref{eq:final}), that $\beta_{l,i}\to0$
when $l\to\infty.$
\end{proof}

\end{document}